\newtheorem{theorem}{Theorem}[section]
\newtheorem{lemma}[theorem]{Lemma}
\newtheorem{proposition}[theorem]{Proposition}
\newtheorem{expl}[theorem]{Example}
\newenvironment{example}[1]{\begin{expl}\rm #1}{\end{expl}}
\newtheorem{remrk}[theorem]{Remark}
\newenvironment{remark}[1]{\begin{remrk}\rm #1}{\end{remrk}}
\def\IR{\mathbb R}
\def\IN{\mathbb N}
\newcommand{\C}{\mathcal{C}}
\newcommand{\E}{\mathcal{E}}
\newcommand{\FI}{\F_{\mathcal{I}}}
\newcommand{\FB}{\F_{\mathcal{B}}}
\newcommand{\M}{\mathcal M}
\renewcommand{\S}{\mathcal S}
\renewcommand{\c}{{\bm c}}
\newcommand{\e}{{\bm e}}
\newcommand{\wbeta}{\bm{\beta}}
\newcommand{\dist}{\mathrm{dist}}
\newcommand{\ce}{{ce}}
\renewcommand{\O}{\Omega}
\newcommand{\OO}{\O_0}
\newcommand{\oc}{\omega_{\c}}
\renewcommand{\oe}{\omega_{\e}}
\newcommand{\oce}{\omega_{\c\e}}
\newcommand{\N}[1]{\|#1\|}
\newcommand{\NN}[1]{\left\|#1\right\|}
\newcommand{\tnorm}[1]{|\!|\!|#1|\!|\!|}
\newcommand{\abs}[1]{|#1|}
\newcommand{\DG}{\mathsf{DG}}
\renewcommand{\choose}[2]{\genfrac{}{}{0pt}{}{#1}{#2}}
\newcommand{\uu}[1]{\bm#1}
\newcommand{\uuu}[1]{\bm{\mathsf{#1}}}              % tensorfields
\newcommand{\jmp}[1]{[\![#1]\!]}                  % jump
\newcommand{\uuujmp}[1]{[\![#1]\!]}         % tensor valued jump
\newcommand{\mvl}[1]{\{\!\!\{#1\}\!\!\}}          % mean value
\newcommand{\wt}[1]{\widetilde{#1}}
\newcommand{\wh}[1]{\widehat{#1}}
\newcommand{\D}{\mathsf{D}}
\newcommand{\dd}{{\mathsf d}}
\newcommand{\dx}{\dd x}
\newcommand{\ds}{\dd s}
\def\hh{{\tt h}}
\def\cc{{\tt c}}
\newcommand{\T}{\mathcal{T}^{\sigma,l}}
\newcommand{\F}{\mathcal{F}}
\DeclareMathOperator{\rank}{rank}
\DeclareFontShape{OT1}{cmtt}{bx}{n}{
<5><6><7><8><9><10><10.95><12><14.4><17.28><20.74><24.88>cmttb10}{}
\title[Spectral Mixed DGFEM for Linear Elasticity]{Stability and Convergence of Spectral Mixed Discontinuous Galerkin Methods for 3D Linear Elasticity on Anisotropic Geometric Meshes}
\author[T.~P.~Wihler and M.~Wirz]{Thomas P.~Wihler \and Marcel Wirz}
\address{Mathematics Institute, University of Bern, CH-3012 Bern, Switzerland}
\email{wihler@math.unibe.ch \and marcelwirz82@gmail.com}
\keywords{linear elasticity in polyhedra, anisotropic geometric meshes, spectral methods, discontinuous Galerkin methods, inf-sup stability, exponential convergence.}
\subjclass[2010]{65N30}
\begin{document}

\begin{abstract}
We consider spectral mixed discontinuous Galerkin finite element discretizations of the Lam\'e system of linear elasticity in polyhedral domains in~$\IR^3$. In order to resolve possible corner, edge, and corner-edge singularities, anisotropic geometric edge meshes consisting of hexahedral elements are applied. We perform a computational study on the discrete inf-sup stability of these methods, and especially focus on the robustness with respect to the Poisson ratio close to the incompressible limit (i.e. the Stokes system). Furthermore, under certain realistic assumptions (for analytic data) on the regularity of the exact solution, we illustrate numerically that the proposed mixed DG schemes converge exponentially in a natural DG norm.
\end{abstract}

\maketitle

%%%%%%%%%%%%%%%%%%%%%%%%%%%%%%%%%%%%%%%%%%%%%%%%%%%%%%%%%%%%%%%%%%%%

\section{Introduction}
Consider an axi-parallel, open and bounded polyhedron $\Omega\subset\IR^3$, with Lipschitz boundary $\partial\Omega$, in the three-dimensional Cartesian system. Using a spectral discontinuous Galerkin finite element method (DGFEM), we shall study the numerical approximation of the following linear elasticity problem in mixed form: Find a displacement field $\bm{u}=(u_1,u_2,u_3)\in
  H^1_0(\Omega)^3$, and a pressure function~$p\in L^2_0(\Omega)$ such that
\begin{alignat}{2}
-\Delta\bm u+\nabla p&=\bm f&\quad&\text{in }\Omega,\label{eq:problem1}\\
\nabla\cdot\bm u+(1-2\nu)p&=0&&\text{in }\Omega,\label{eq:problem2}\\
\bm u &=\bm 0&&\text{on }\partial\Omega\label{eq:problem3}.
\end{alignat}
Here, $\nabla\cdot$ is the divergence operator,
$\nu\in(0,\nicefrac{1}{2}]$ is the Poisson ratio, and $\bm f\in L^2(\Omega)^3$
is an external force (scaled by $\nicefrac{2(1+\nu)}{E}$, where $E>0$ is Young's
modulus). We shall include the limit case~$\nu=\nicefrac12$ which
corresponds to the Stokes equations of incompressible fluid flow.

Elliptic boundary value problems in three-dimensional polyhedral domains are well-known to exhibit isotropic corner and anisotropic edge singularities, as well as a combination of the both; see, e.g., \cite{DaugeCostabelNicaise,MaRo10,MazzucatoNistor}. In a recent series of papers~\cite{SchoetzauSchwabWihler2009:1,SchoetzauSchwabWihler2009:2,SchoetzauSchwabWihlerNeumann} on the numerical approximation of the Poisson equation in 3d polyhedra the use of $hp$-version DG methods has been proposed (see also~\cite{MadayMarcati:2019} for eigenvalue problems with singular potentials). These schemes provide a convenient framework to resolve anisotropic edge singularities on (irregular) geometrically and anisotropically refined meshes, whilst using high-order spectral elements in the interior. Furthermore, supposing that the data is sufficiently smooth, it has been proved in~\cite{SchoetzauSchwabWihler2009:2,SchoetzauSchwabWihlerNeumann} that exponential convergence rates for $hp$-DG methods can be
achieved. 

In our previous work~\cite{wihlerWirz}, we have employed the approach~\cite{SchoetzauSchwabWihler2009:1,SchoetzauSchwabWihler2009:2,SchoetzauSchwabWihlerNeumann} in order to apply the high-order mixed DG methods introduced in~\cite{HoustonSchoetzauWihler2006} to the three-dimensional framework. More precisely, we have analyzed high-order interior penalty (IP) DG methods (of uniform but arbitrarily high polynomial degree) for the numerical approximation of~\eqref{eq:problem1}--\eqref{eq:problem3} on geometrically refined edge meshes. They can be seen as $hp$-methods with fixed and uniform polynomial degrees, or as spectral methods on locally refined meshes; in this paper they shall simply be termed spectral DGFEM. Incidentally, in contrast to classical IPDG methods, these DG schemes feature anisotropically scaled penalty terms which account for possible element anisotropies; see also~\cite{GeHaHo07}. A focal point of the article~\cite{wihlerWirz} has been to provide an inf-sup stability analysis for mixed IPDG schemes on anisotropic meshes. Our results, which are based in parts on~\cite{Schoetzau:2004:MHP}, are explicit with respect to both the (uniform) polynomial degree and the Poisson ratio~$\nu$; cf. also~\cite{MadayBernardi} in the context of spectral methods for the Stokes equations. In particular, for fixed (but arbitrarily high) polynomial degrees, our stability analysis proves that the behaviour of the mixed DG scheme remains robust as~$\nu$ tends to the critical limit of~$\nicefrac12$ of incompressible materials. Furthermore, following the techniques presented in~\cite{SchoetzauSchwabWihler2009:2,SchoetzauSchwabWihlerNeumann,SW03} we showed that the proposed DG schemes are able to achieve exponential rates of convergence for the class of piecewise analytic functions in weighted Sobolev spaces studied in~\cite{DaugeCostabelNicaise}.

The goal of the present paper is to provide a computational investigation of the theoretical inf-sup stability results on anisotropic geometric edge meshes presented in~\cite{Schoetzau:2004:MHP,wihlerWirz}. Furthermore, we will confirm the asserted exponential convergence of the spectral mixed DG method for a number of examples with typical edge and corner singularities in polyhedral domains. We will also look into the robustness of the DG approach with respect to the Poisson ratio as~$\nu\to\nicefrac12$. The precise outline of the paper is as follows:  In Section~\ref{sc:2}, we present the mixed formulation of~\eqref{eq:problem1}--\eqref{eq:problem3}, and recall its regularity in terms of  anisotropically weighted Sobolev spaces. Furthermore, in Section~\ref{sec:dgdisc} the geometric edge meshes and the spectral mixed DG discretizations will be introduced; in addition, in Section~\ref{sec:stability}, we revisit a discrete inf-sup stability framework together with the well-posedness of the DG scheme on anisotropic geometric edge meshes. Then, in Section~\ref{sec:AspectsNumComp}, we discuss the practical computation of the DG solution as well as of the discrete inf-sup constants, and present some numerical results in a few typical reference situations. In Section~\ref{sec:CompResults} we perform a number of experiments which confirm the exponential convergence as well as the robustness of the DG method with respect to the Poisson ratio. Finally, we add a few concluding remarks in Section~\ref{sec:conclusions}.

\subsubsection*{Notation}
Throughout this article, we use the following notation: For a domain
$D\subset{\IR^{d}}$, $d\ge1$, let $L^2(D)$ signify the
Lebesgue space of square-integrable functions equipped with the usual
norm~${\N{\cdot}_{L^2(D)}}$. Furthermore, we write~$L^2_0(D)$ to
denote the subspace of~$L^2(D)$ of all functions with vanishing mean
value on~$D$. The standard Sobolev space of functions with integer
regularity exponent $s \ge 0$ is signified by $H^s(D)$; we write
$\N{\cdot}_{H^s(D)}$ and $\abs{\cdot}_{H^s(D)}$ for the corresponding
norms and semi-norms, respectively. As usual, we define $H^1_0(D)$ as
the subspace of functions in $H^1(D)$ with zero trace on $\partial D$. For
vector- and tensor-valued functions we use the standard
notation $(\nabla \uu{v})_{ij}:=\partial_j v_i$, and
$(\nabla\cdot\underline{\sigma})_i:=\sum_{j=1}^3\,\partial_j \sigma_{ij}$
and $\underline{\sigma}:\underline{\tau}:=\sum_{i,j=1}^3\, \sigma_{ij}\tau_{ij}$, respectively.
Moreover, for vectors $\uu{v}, \uu{w}\in\mathbb{R}^3$, let $\uu{v}\otimes\uu{w}\in\mathbb{R}^{3\times3}$ be the matrix whose $ij$th component is $v_i\,w_j$.

%%%%%%%%%%%%%%%%%%%%%%%%%%%%%%%%%%%%%%%%%%%%%%%%%%%%%%%%%%%%%%%%%%%%

\section{Linear elasticity in polyhedra}\label{sc:2}
\label{sc:problem}
We discuss the weak formulation of the mixed system of linear elasticity~\eqref{eq:problem1}--\eqref{eq:problem3}. Furthermore, we review its regularity in polyhedral domains.

\subsection{Mixed formulation and well-posedness}
A standard mixed formulation
of~\eqref{eq:problem1}--\eqref{eq:problem3} is to find $(\uu u, p)\in
H^1_0(\Omega)^3\times L^2_0(\Omega)$ such that
\begin{equation}\label{eq:weakmixed}
\begin{split}
A(\uu u,\uu v)+B(\uu v,p)&=\int_\Omega\uu f\cdot\uu v\,\dx,
\\ -B(\uu u,q) +C(p,q) &=0,
\end{split}
\end{equation}
for all $(\uu v,q)\in H^1_0(\Omega)^3\times L^2_0(\Omega)$, where
\begin{align*}
A(\uu u, \uu v)&:=\int_\Omega\nabla\uu u:\nabla\uu v\,\dx,
& B(\uu v,q)&:=-\int_\Omega q\,\nabla\cdot\uu v\,\dx,
\\ C(p,q)&:=(1-2\nu)\int_\Omega pq\,\dx.
\end{align*}
More compactly, we can write~\eqref{eq:weakmixed} equivalently in the form 
\begin{equation}\label{eq:compact}
a(\uu u,p;\uu v,q)=\int_\Omega\uu f\cdot\uu v\,\dx\qquad\forall\,(\uu v,q)\in
H^1_0(\Omega)^3\times L^2_0(\Omega),
\end{equation}
with
\[
a(\uu u,p;\uu v,q):=A(\uu u,\uu v)+B(\uu v,p)-B(\uu u,q)+C(p,q).
\]
It is straightforward to verify that~$a$ is a bounded bilinear form on~$H^1_0(\Omega)^3\times L^2_0(\Omega)$, and that, for $\nu\in(0,\nicefrac{1}{2})$, it is also coercive. In particular, by application of the Lax-Milgram theorem, we conclude that the solution~$(\uu u,p)\in H^1_0(\Omega)^3\times L^2_0(\Omega)$
of~\eqref{eq:compact}, and, thus, of~\eqref{eq:weakmixed}, exists and is
unique. In addition, for~$\nu=\nicefrac12$, which corresponds to the
Stokes equations, problem~\eqref{eq:compact} is still well-posed. Indeed, this is an immediate consequence of the inf-sup condition
\[
\inf_{0\not\equiv q\in L^2_0(\Omega)}\sup_{\uu 0\not\equiv \uu{v}\in
H^1_0(\Omega)^3}
\frac{-\int_\Omega\, q\nabla\cdot\uu{v}
\,\dx}{\|\nabla\uu{v}\|_{L^2(\Omega)} \|q\|_{L^2(\Omega)}}\ge \kappa
>0,
\]
where $\kappa$ is the inf-sup constant, depending only on $\Omega$;
see~\cite{BrezziFortin91,GiraultRaviart86} for
details.

\subsection{Regularity}
Following~\cite{DaugeCostabelNicaise} we recall the regularity of the solution of~\eqref{eq:problem1}--\eqref{eq:problem3} in weighted Sobolev spaces; cf.~also~\cite{Guo95,BabGuo_edge1,BabGuo_edge2}. To this end,
we denote by~$\C$ the set of corners, and by~$\E$ the set of
edges of~$\Omega$. Potential singularities of the solution are located on the skeleton of~$\Omega$ given by
\[
\S=\left(\bigcup_{\c\in\C}\c\right)\cup\left(\bigcup_{\e\in\E}\e\right)\subset \partial\Omega.
\]
Given a corner~$\c\in\C$, an edge~$\e\in\E$, and
$x\in\Omega$, we define the distance functions
\[
r_{\c}(x)=\dist(x, \c), \qquad r_\e(x)=\dist(x,
\e),\qquad \rho_{\ce}(x)=r_\e(x)/r_{\c}(x).
\]
Furthermore, for each corner~$\c\in\C$, we signify by
$\E_{\c}=\left\{\,\e\in\E\,:\, \c\cap\overline\e\neq\emptyset\,\right\}$
the set of all edges of~$\Omega$ which meet at~$\c$.
For any $\e\in\E$, the set of corners of~$\e$ is given by
$\C_\e=\left\{\,\c\in\C\,:\,\,\c\cap\overline\e\neq\emptyset\,\right\}$.
Then, for $\c\in\C$, $\e\in\E$, respectively $\e \in\E_{\c}$, and a sufficiently small parameter~$\varepsilon>0$,
we define the neighbourhoods
\begin{equation*}
\begin{split}
\oc&=\{\,x\in\Omega\,:\, r_{\c}(x)<\varepsilon \;\wedge\;
\rho_{\ce}(x)>\varepsilon \quad\forall\,\e\in\E_{\c}\,\}, \\
\oe&=\{\,x\in\Omega\,:\,\, r_\e(x)<\varepsilon \;\wedge\;
r_{\c}(x)>\varepsilon \quad\forall\,\c\in\C_\e\,\}, \\
\oce&=\{\,x\in\Omega\,:\, r_{\c}( x)<\varepsilon \;\wedge\;
\rho_{\c\e}(x)<\varepsilon\,\}.
\end{split}
\end{equation*}
Moreover, we define the interior part of~$\Omega$ by $\Omega_0=\{x\in\Omega:\,\dist(x,\partial\Omega)>\varepsilon\}$.

Near corners $\c\in\C$ and edges $\e\in\E$, we shall use local coordinate systems in~$\oe$ and $\oce$, which are chosen such that $\e$ corresponds to the direction~$(0,0,1)$.  Then, we denote quantities that are transversal to~$\e$ by~$(\cdot)^\perp$, and quantities parallel to~$\e$ by~$(\cdot)^\|$. For instance, if~$\bm\alpha\in\mathbb{N}^3_0$ is a multi-index associated with the three local coordinate directions in~$\oe$ or~$\oce$, then we write~$\bm\alpha=(\bm \alpha^\perp, \alpha^\|)$, where
$\bm \alpha^\perp=(\alpha_1, \alpha_2)$
and~$\alpha^\|=\alpha_3$. In addition, we use the notation~$|\bm\alpha^\perp|=\alpha_1+\alpha_2$, and~$|\bm\alpha|=|\bm\alpha^\perp|+\alpha^\|$.

Following~\cite[Def.~6.2 and Eq.~(6.9)]{DaugeCostabelNicaise}, we introduce anisotropically weighted Sobolev spaces. To this end, to each $\c\in\C$ and $\e\in\E$, we associate a corner and an edge exponent $\beta_\c, \beta_\e\in\mathbb{R}$, respectively.
We collect these quantities in the weight vector $\wbeta=\{\beta_\c:\,\c\in\C\}\cup\{\beta_\e:\,\e\in\E\} \in \mathbb{R}^{|\C|+|\E|}$. Then, for $m\in\mathbb{N}_0$, we define the weighted semi-norm
\begin{equation*}
\begin{split}
|v|^2_{M^m_{\wbeta}(\Omega)} = |v|^2_{H^m(\OO)}&+
\sum_{\e\in\E}\sum_{\genfrac{}{}{0pt}{}{\bm\alpha\in\mathbb{N}^3_0}{|\bm\alpha|=m}}
\big\|{r_\e^{\beta_\e+|\bm\alpha^\perp|}\D^{\bm\alpha}v}\big\|^2_{L^2(\oe)}
\\
&+\sum_{\c\in\C}\sum_{\genfrac{}{}{0pt}{}{\bm\alpha\in\mathbb{N}^3_0}{|\bm\alpha|=m}}
\big\|{r_\c^{\beta_\c+|\bm\alpha|}\D^{\bm\alpha}v}\big\|^2_{L^2(\oc)}\\
&+\sum_{\c\in\C}\sum_{\e\in\E_\c}
   \big\|{r_\c^{\beta_\c+|\bm\alpha|}\rho_{\c\e}^{\beta_\e+|\bm\alpha^\perp|}
       \D^{\bm\alpha}v}\big\|^2_{L^2(\oce)},
\end{split}
\end{equation*}
as well as the full norm  $\NN{v}^2_{M^m_{\wbeta}(\Omega)}=\sum_{k=0}^m\N{v}^2_{M^k_{\wbeta}(\Omega)}$; here, the operator $\D^{\bm\alpha}$ denotes
the partial derivative in the local coordinate directions corresponding to
the multi-index~$\bm\alpha$. The space $M^m_{\wbeta}(\Omega)$ is the weighted Sobolev space obtained as the closure of~$C^\infty_0(\Omega)$ with respect to the norm~$\NN{\cdot}_{M^m_{\wbeta}(\Omega)}$. 

We notice the following regularity property of the solution
of~\eqref{eq:problem1}--\eqref{eq:problem3} in terms of the weighted
Sobolev spaces defined above; see~\cite{DaugeCostabelNicaise} (in
addition, cf.~\cite{MaRo10,MazzucatoNistor}):

\begin{proposition}\label{pr:LowRegularity}
There exist upper bounds $\beta_{\mathcal E}, \beta_{\mathcal C}>0$ such that, if the weight vector~$\bm\beta$ satisfies
\[
0 < \beta_{\bm e} < \beta_{\mathcal E},\quad 
0 < \beta_{\bm c} < \beta_{\mathcal C},
\qquad \bm e\in{\mathcal E},\ \bm c\in{\mathcal C},
\]
then, for every~$m\in\mathbb{N}$, the solution~$(\bm u,p)\in
H_0^1(\Omega)^3\times L^2_0(\Omega)$
of~\eqref{eq:problem1}--\eqref{eq:problem3} with~$\bm f\in
M^m_{1-\bm\beta}(\Omega)^3$ fulfills~$(\bm u,p)\in
M^m_{-1-\bm\beta}(\Omega)^3\times M^{m-1}_{-\bm\beta}(\Omega)$.
\end{proposition}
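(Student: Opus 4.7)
The plan is to invoke the general regularity framework of Dauge--Costabel--Nicaise directly, after verifying that the mixed Lam\'e/Stokes system fits into the class of strongly elliptic boundary value problems they consider. Since the proposition is stated as a ``we notice'' type of result referring explicitly to~\cite{DaugeCostabelNicaise}, my proof would largely amount to a careful citation, but I would still want to organize the argument so that the roles of $\beta_{\mathcal C}$ and $\beta_{\mathcal E}$ become transparent.

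First I would reformulate~\eqref{eq:problem1}--\eqref{eq:problem3} as the elliptic Lam\'e--Stokes system: eliminating $p$ (in the compressible case) or keeping the saddle-point form (in the incompressible case), both give a $3\times3$ (resp.\ $4\times4$) Agmon--Douglis--Nirenberg elliptic system with homogeneous Dirichlet data, which is covered by the hypotheses in~\cite{DaugeCostabelNicaise}. Next, following their strategy, I would decompose the solution into an interior smooth contribution on $\OO$, edge-type contributions localized in $\oe$, corner-type contributions localized in $\oc$, and tensorized corner-edge contributions localized in $\oce$. On each such neighbourhood the local regularity is measured by an anisotropically weighted $M^m_{\bm\beta}$-norm with the appropriate power $r_\e^{\beta_\e+|\bm\alpha^\perp|}$, $r_\c^{\beta_\c+|\bm\alpha|}$ or the combined factor $r_\c^{\beta_\c+|\bm\alpha|}\rho_{\c\e}^{\beta_\e+|\bm\alpha^\perp|}$, exactly matching the definition of $M^m_{\bm\beta}(\Omega)$ recalled above.

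The upper thresholds $\beta_{\mathcal C}$ and $\beta_{\mathcal E}$ would be identified as follows. At each edge $\e$, one performs a partial Mellin transform in the transversal polar angle around $\e$, obtaining a $2$D Lam\'e problem on the cross-section wedge whose singular exponents $\lambda_\e$ control the edge regularity; the bound $\beta_{\mathcal E}$ is the infimum over $\e\in\mathcal E$ of $\mathrm{Re}(\lambda_\e)$ (shifted by the correct amount so that the shift $-1-\bm\beta$ yields $H^1_0$-membership). Analogously, at each corner $\c$, a Mellin transform in the radial variable reduces the problem to a parameter-elliptic spherical problem on the spherical cap $\Omega\cap S^2$ whose eigenvalues $\mu_\c$ dictate the corner regularity, and $\beta_{\mathcal C}$ is the corresponding infimum. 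The fact that both thresholds are strictly positive for the Lam\'e/Stokes system is exactly the shift theorem of~\cite{DaugeCostabelNicaise}. Putting the pieces together through a partition of unity adapted to $\OO,\oc,\oe,\oce$, and using interior elliptic regularity on $\OO$, one lifts $\bm f\in M^m_{1-\bm\beta}(\Omega)^3$ to $(\bm u,p)\in M^m_{-1-\bm\beta}(\Omega)^3\times M^{m-1}_{-\bm\beta}(\Omega)$.

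The hard part is not the interior or corner analysis, for which the classical Kondrat'ev--style Mellin machinery applies verbatim, but rather the matching at the corner--edge transition zones $\oce$: one must show that the tensorized weight $r_\c^{\beta_\c+|\bm\alpha|}\rho_{\c\e}^{\beta_\e+|\bm\alpha^\perp|}$ captures all crossed singularities without loss. In~\cite{DaugeCostabelNicaise} this is achieved by means of a cone-to-wedge scaling argument showing that the corner singular functions themselves inherit the edge singularities of the reduced $2$D problem on the spherical cap. I would invoke that argument as a black box; trying to reprove it anisotropically is precisely the bottleneck that I would avoid here.
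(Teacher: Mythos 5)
The paper offers no proof of this proposition at all: it is stated as a known regularity result taken directly from \cite{DaugeCostabelNicaise} (cf.\ also \cite{MaRo10,MazzucatoNistor}), and your proposal is, at bottom, the same careful citation of that reference, fleshed out with a plausible outline of the Mellin/partition-of-unity machinery that underlies it. Nothing in your sketch conflicts with the cited theory, so it is consistent with (and strictly more detailed than) what the paper itself provides.
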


%%%%%%%%%%%%%%%%%%%%%%%%%%%%%%%%%%%%%%%%%%%%%%%%%%%%%%%%%%%%%%%%%%%%

\section{Mixed discontinuous Galerkin methods on geometric meshes}\label{sec:dgdisc}

In the following section we will introduce spectral mixed DG discretizations on geometric meshes for the numerical solution of~\eqref{eq:problem1}--\eqref{eq:problem3}.

\subsection{Hexahedral geometric edge meshes}\label{sc:meshes}
In order to numerically resolve possible corner and edge singularities in the solution~$(\bm u,p)$ of~\eqref{eq:problem1}--\eqref{eq:problem3}, we employ
anisotropic geometric edge meshes. To this end, we follow the construction in~\cite{Schoetzau:2004:MHP}, where such meshes have been studied in the context of DGFEM for the Stokes equations; see also the earlier paper~\cite{BG96} on conforming~$hp$-version finite element methods. Specifically, we begin from a coarse regular and shape-regular, quasi-uniform partition $\mathcal{T}^0=\{Q_j\}_{j=1}^J$ of~$\Omega$ into~$J$ convex axi-parallel hexahedra. Each of these elements~$Q_j\in\mathcal{T}^0$ is the image under an affine mapping~$G_j$ of the reference patch~$\wt{Q}=(-1,1)^3$, i.e. $Q_{j} = G_{j}(\wt{Q})$. The mappings~$G_j$ are compositions of (isotropic) dilations and translations.

Based on the coarse partition (macro mesh)~$\mathcal{T}^0$ we will use three canonical geometric refinements (patches) towards corners, edges and corner-edges of~$\wt{Q}$; see Figure~\ref{fig:Meshes}. They feature a refinement ratio~$\sigma\in(0,1)$, as well as a number of refinement levels~$\ell\in\mathbb{N}_0$; to give an example, in Figure~\ref{fig:Meshes}, we have selected~$\sigma=\nicefrac12$, and~$\ell=3$.

\begin{figure}
\begin{center}
\begin{pspicture}(0,-0.2)(8,4.5)
\psset{linewidth=.01}
\def\fine{0.01}
\rput(4,3){
%front horizontal
\psline(0,0)(1,0) \psline(0,.25)(1,.25) \psline(0,.5)(1,.5)
\psline(0,.75)(1,.75) \psline(0,1)(1,1)
%front vertical
\psline(0,0)(0,1) \psline(.25,0)(.25,1) \psline(.5,0)(.5,1)
\psline(.75,0)(.75,1) \psline(1,0)(1,1)
%side horizontal
\psline(1,0)(1.5,.5) \psline(1,.25)(1.5,.75) \psline(1,.5)(1.5,1)
\psline(1,.75)(1.5,1.25) \psline(1,1)(1.5,1.5)
%side vertical
\psline(1.125,.125)(1.125,1.125) \psline(1.25,.25)(1.25,1.25)
\psline(1.375,.375)(1.375,1.375) \psline(1.5,.5)(1.5,1.5)
%top horizontal
\psline(.125,1.125)(1.125,1.125) \psline(.25,1.25)(1.25,1.25)
\psline(.375,1.375)(1.375,1.375) \psline(.5,1.5)(1.5,1.5)
%top vertical
\psline(0,1)(.5,1.5) \psline(.25,1)(.75,1.5) \psline(.5,1)(1,1.5)
\psline(.75,1)(1.25,1.5)
}
\rput(1.5,0){
%front horizontal
\psline(0,0)(1,0)  \psline(0,.5)(1,.5) \psline(.5,.75)(1,.75)
\psline(.75,.875)(1,.875) \psline(0,1)(1,1)
%front vertical
\psline(0,0)(0,1) \psline(.5,0)(.5,1) \psline(.75,.5)(.75,1)
\psline(.875,.75)(.875,1) \psline(1,0)(1,1)
%side horizontal
\psline(1,0)(1.5,.5) \psline(1,.5)(1.5,1) \psline(1,.75)(1.5,1.25)
\psline(1,.875)(1.5,1.375) \psline(1,1)(1.5,1.5)
%side vertical
\psline(1.5,.5)(1.5,1.5)
%top horizontal
\psline(.5,1.5)(1.5,1.5)
%top vertical
\psline(0,1)(.5,1.5) \psline(.5,1)(1,1.5) \psline(.875,1)(1.375,1.5)
\psline(.75,1)(1.25,1.5)
}
\rput(4,0){
%front horizontal
\psline(0,0)(1,0)  \psline(0,.5)(1,.5) \psline(.5,.75)(1,.75)
\psline(.75,.875)(1,.875) \psline(0,1)(1,1)
%front vertical
\psline(0,0)(0,1) \psline(.5,0)(.5,1) \psline(.75,.5)(.75,1)
\psline(.875,.75)(.875,1) \psline(1,0)(1,1)
%side horizontal
\psline(1,0)(1.5,.5) \psline(1,.5)(1.5,1) \psline(1,.75)(1.25,1)
\psline(1,.875)(1.125,1) \psline(1,1)(1.5,1.5)
%side vertical
\psline(1.0625,.8125)(1.0625,1.0625) \psline(1.125,.625)(1.125,1.125)
\psline(1.25,.25)(1.25,1.25) \psline(1.5,.5)(1.5,1.5)
%top horizontal
\psline(.8125,1.0625)(1.0625,1.0625) \psline(.625,1.125)(1.125,1.125)
\psline(.25,1.25)(1.25,1.25) \psline(.5,1.5)(1.5,1.5)
%top vertical
\psline(0,1)(.5,1.5) \psline(.5,1)(1,1.5) \psline(.75,1)(1,1.25)
\psline(.875,1)(1,1.125)
}
\rput(6.5,0){
%front horizontal
\psline(0,0)(1,0)  \psline(0,.5)(1,.5) \psline(0,.75)(1,.75)
\psline(0,.875)(1,.875) \psline(0,1)(1,1)
%front vertical
\psline(0,0)(0,1) \psline(.5,0)(.5,1) \psline(.75,0)(.75,1)
\psline(.875,0)(.875,1) \psline(1,0)(1,1)
%side horizontal
\psline(1,0)(1.5,.5) \psline(1,.5)(1.5,1) \psline(1,.75)(1.5,1.25)
\psline(1,.875)(1.5,1.375) \psline(1,1)(1.5,1.5)
%side vertical
\psline(1.0625,.0625)(1.0625,1.0625) \psline(1.125,.125)(1.125,1.125)
\psline(1.25,.25)(1.25,1.25) \psline(1.5,.5)(1.5,1.5)
%top horizontal
\psline(.0625,1.0625)(1.0625,1.0625) \psline(.125,1.125)(1.125,1.125)
\psline(.25,1.25)(1.25,1.25) \psline(.5,1.5)(1.5,1.5)
%top vertical
\psline(0,1)(.5,1.5) \psline(.5,1)(1,1.5) \psline(.75,1)(1.25,1.5)
\psline(.875,1)(1.375,1.5)
}
\psset{linewidth=.01}
\psline{<-}(3.7,3.5)(2.75,2)
\psline{<-}(4.75,2.75)(4.75,2)
\psline{<-}(5.8,3.5)(6.75,2)
\uput[0](-1,4){\small{macro mesh~$\mathcal{T}^0$}}
\uput[0](-1,1){\small{patches}}
\end{pspicture}
\end{center}
\caption{Canonical geometric refinements (patches) towards edges, corners, and corner-edges (bottom) to be built into the macro mesh~$\mathcal{T}^0$ (top) by means of suitable affine transformations.}
\label{fig:Meshes}
\end{figure}
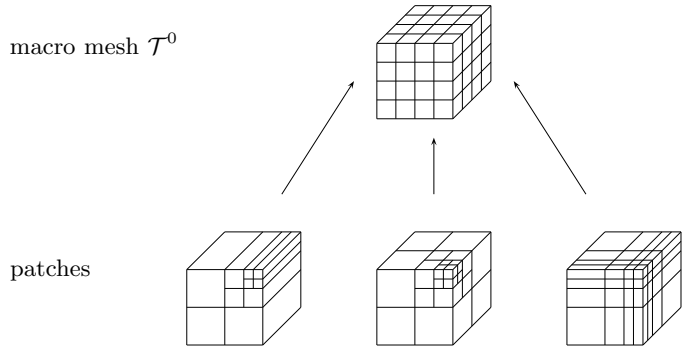

Given a (fixed) refinement ratio~$\sigma\in(0,1)$ as well as a refinement level value~$\ell\in\IN_0$, geometric meshes in~$\Omega$ are now built by applying the patch mappings~$G_j$ to transform the above canonical geometric mesh patches on the reference patch~$\wt Q$ to the macro-elements~$Q_j\in\mathcal{T}^0$, thereby yielding a local patch mesh~$\M^{\sigma,\ell}_j$ on~$Q_j$. The patches~$Q_j$ away from the singular support~$\S$ (i.e. with $\overline{Q}_j\cap \S=\emptyset$) are left unrefined, i.e. in this case we let~$\M^{\sigma,\ell}_j=\{Q_j\}$. It is important to note that the geometric refinements in the canonical patches have to be suitably selected, oriented and combined in order to achieve a proper geometric refinement towards corners and edges of~$\Omega$. Then, a $\sigma$-geometric mesh in~$\Omega$ is given by $\T=\bigcup_{j=1}^{J}\M^{\sigma,\ell}_j$. Furthermore, the sequence~$\{\T\}_{\ell\in\IN_0}$ is referred to as a $\sigma$-geometric mesh family. We note that this family of meshes is anisotropic as well as irregular. For a more general construction of geometric meshes on polyhedral domains, we refer to~\cite{SchoetzauSchwabWihler2009:1}.

\subsection{Faces and face operators}
We denote the set of all interior faces in~$\T$ by $\FI(\T)$, and the set of
all boundary faces by~$\FB(\T)$. Further, let~$\F(\T) = \FI(\T)
\cup \FB(\T)$ signify the set of all (smallest) faces of~$\T$.
In addition, for an element~$K\in\T$, we denote the set of its faces by
$\F_K=\left\{\,f\in\F\,:\,f\subset\partial K\,\right\}$.

Next, we recall the standard DG trace operators. For this purpose, consider an interior face $f=\partial K^\sharp\cap\partial K^\flat\in\FI(\T)$
shared by two neighbouring elements~$K^\sharp, K^\flat\in\T$. Furthermore, let $u$, $\bm v$ and $\underline{w}$ be scalar-, vector, and tensor-valued functions, respectively, all sufficiently smooth  inside the elements~$K^\sharp, K^\flat$. Then, we
define the following trace operators along~$f$:
\begin{align*}
\jmp{u}&=u|_{K^\sharp} \bm n_{K^\sharp}+u|_{K^\flat} \bm n_{K^\flat},&
\mvl{u}&=\nicefrac12\left(u|_{K^\sharp}+u|_{K^\flat}\right),\\
\jmp{\bm v}&=\bm v|_{K^\sharp}\cdot \bm n_{K^\sharp}+\bm v|_{K^\flat}\cdot \bm n_{K^\flat},&
\mvl{\bm v}&=\nicefrac12\left(\bm v|_{K^\sharp}+\bm v|_{K^\flat}\right),\\
\jmp{\underline{w}}&=\underline{w}|_{K^\sharp}\otimes \bm n_{K^\sharp}+\underline{w}|_{K^\flat}\otimes \bm n_{K^\flat},&
\mvl{\underline{w}}&=\nicefrac12\left(\underline{w}|_{K^\sharp}+\underline{w}|_{K^\flat}\right).
\end{align*}
Here, for an element~$K\in\T$, we denote by~$\bm n_K$ the outward
unit normal vector on~$\partial K$. Similarly, for a boundary
face~$f=\partial K\cap\partial \Omega\in\FB(\T)$, with $K\in\T$, and a sufficiently smooth scalar function~$u$, we let $\jmp{u}=u|_K \bm n_\Omega$, and $\mvl{u}=u|_K$, where $\bm n_\Omega$ is the outward unit normal vector on~$\partial\Omega$; obvious modifications are made for vector- and tensor-valued functions in accordance with the definition above.

Finally, $\nabla_h$ and $\nabla_h\cdot$ denote the element-wise
gradient and divergence operators, respectively. Here and in the sequel, we use abbreviations like
\[
  \int_{\F}(\cdot)\, \ds := \sum_{f\in{\F}}\int_f(\cdot)\, \ds,\qquad
\N{\nabla_h
    (\cdot)}^2_{L^2(\Omega)}:=\sum_{K\in\T}\N{\nabla(\cdot)}_{L^2(K)}^2.
\]

\subsection{Spectral DG discretizations}\label{sec:dgdisc1}
Given a geometric edge mesh $\T$ on $\Omega$ and a polynomial degree $k\ge1$ (which is assumed uniform and isotropic on~$\T$), we approximate~\eqref{eq:problem1}--\eqref{eq:problem3} by finite element
functions $(\uu{u}_h,p_h)\in \uu{V}_h\times Q_h$, where
\begin{equation}\label{eq:DGspaces}
\begin{split}
\uu{V}_h&:=\{\, \uu{v}\in L^2(\Omega)^3 : \uu{v}|_K \in {\mathcal Q}_{k}(K)^3, \
K\in\T \,\},\\
Q_h&:=\{\, q\in L^2_0(\Omega) : q|_K \in {\mathcal Q}_{k-1}(K),\ K\in\T \, \}.
\end{split}
\end{equation}
Here, for $k\ge 0$, $K\in\T$, ${\mathcal Q}_{k}(K)$ denotes the space
of all polynomials of degree at most $k$ in each variable on $K$. In addition, we let
\[
\bm V(h)=\bm V_h+H^1_0(\Omega)^3.
\]

On the space~$\bm V_h$ we consider the stabilization function $\cc\in L^{\infty}(\F)$ given by
\begin{equation}
\cc(x) :=\gamma\hh^{-1}(x)k^2, \label{eq:c}
\end{equation}
where $\gamma>0$ is a penalty parameter independent of the refinement ratio~$\sigma$, the number of refinement levels~$\ell$, and the polynomial degree~$k$. Furthermore, for~$x\in f$, with $f\in\F$, the mesh function~$\hh$ is defined by
\[
\hh(x) :=
\begin{cases}
 \min\{h_{K^\sharp,f}^{\perp},h_{K^\flat,f}^{\perp}\}, & x\in
f \subset {\F}_{\mathcal I}, f=\partial K^\sharp\cap\partial K^\flat, \text{with }K^\sharp, K^\flat\in\T,\\
 h_{K,f}^{\perp}, & x\in f\subset{\F}_{\mathcal B}, f=\partial K\cap\partial\Omega, \text{with }K\in\T.
\end{cases}
\]
In this definition, for~$K\in\T$ and~$f\in\F_K$, we denote by $h_{K,f}^\perp$
the diameter of the element~$K$ in the direction perpendicular to the
face $f$.  

Then, we consider the following mixed discontinuous Galerkin discretization of~\eqref{eq:weakmixed}: Find $(\uu{u}_h,p_h)\in \uu{V}_h\times Q_h$ such that
\begin{equation}\label{eq:mixedDG}
\begin{split}
A_h(\uu u_h,\uu v)+B_h(\uu v,p_h)&=\int_\Omega\uu f\cdot\uu v\,\dx , \\
-B_h(\uu u_h,q) +C_{h}(p_h,q) &=0,
\end{split}
\end{equation}
for all $(\uu{v},q)\in \uu{V}_h\times Q_h$. The forms $A_h$, $B_h$, and
$C_{h}$ are given, respectively, by
\begin{align}
A_{h}(\uu{u},\uu{v})
&:=\int_\Omega\nabla_h \uu{u}:\nabla_h\uu{v}\,\dx
-\int_{\F}\Big(\theta\mvl{\nabla_h\uu{v}}:\uuujmp{\uu{u}}+
\mvl{\nabla_h\uu{u}}:\uuujmp{\uu{v}}\Big)\, \ds \nonumber\\
&\quad + \int_{\F}\cc\,\uuujmp{\uu{u}}:\uuujmp{\uu{v}}\, \ds,
\nonumber \\
B_h(\uu{v},q)&:=
-\int_\Omega \, q\, \nabla_h\cdot\uu{v} \, \dx
+\int_{\F} \mvl{q} \jmp{\uu{v}} \, \ds,
\label{eq:DGforms} \\
C_h(p,q)&:=(1-2\nu)\int_\Omega pq\,\dx, \nonumber
\end{align}
where $\theta\in[-1,1]$ is a fixed parameter. Different choices of~$\theta$ refer to various types of interior penalty DG methods: for instance, the form $A_h$ may be chosen to correspond to the symmetric (for $\theta=1$), incomplete (for $\theta=0$), or non-symmetric (for $\theta=-1$) interior penalty DG discretization of the Laplacian; for a detailed review on a wide class of DG methods for the Poisson problem and the Stokes system, we refer to the articles~\cite{ArnoldBrezziCockburnMarini2001,Schoetzau:2001:MHP}, respectively. 

As in~\eqref{eq:compact}, the discrete DG formulation~\eqref{eq:mixedDG}
is equivalent to finding $(\uu{u}_h,p_h)\in \uu{V}_h\times Q_h$ such
that
\begin{equation}\label{eq:disccompact}
a_h(\uu u_h, p_h; \uu v,q)=\int_\Omega\uu f\cdot\uu v\,\dx
\end{equation}
for all $(\uu v,q)\in \uu V_h\times Q_h$, where
\begin{equation}
a_h(\uu u, p; \uu v,q):=A_h(\uu u,\uu v)+B_h(\uu v,p)-B_h(\uu u,q)+C_h(p,q).
\label{eq:formah}
\end{equation}

\subsection{Discrete inf-sup stability and well-posedness}\label{sec:stability}
In this section we recapitulate an inf-sup stability result from~\cite{wihlerWirz} for the form~$a_h$ given in~\eqref{eq:formah}. We first define the DG-norm
\begin{equation}\label{eq:DGNorm}
\begin{split}
\tnorm{(\uu{v},q)}^2_{\DG}:=&\N{\bm v}_h^2
+(2-2\nu)\N{q}_{L^2(\Omega)}^2,
\end{split}
\end{equation}
for any $(\uu v,q)\in\uu{V}(h)\times L^2(\Omega)$, where
\[
\N{\bm v}_h^2:=\N{\nabla_h\uu{v}}^2_{L^2(\Omega)}+
\int_{\F}\cc\,\abs{\uuujmp{\uu{v}}}^2\, \ds,\qquad \bm v\in\bm V(h).
\] 
If the Poisson ratio satisfies~$\nu\in(0,\nicefrac{1}{2})$, and provided that the penalty parameter~$\gamma$ featured in~\eqref{eq:c} is chosen sufficiently large, then the following coercivity estimate can be shown:
\[
a_h(\uu u, p;\uu u, p) \ge C\N{\uu u}_h^{2}+(1-2\nu)\N{p}_{L^2(\Omega)}^{2}\ge
C(1-2\nu)\tnorm{(\uu u,p)}^2_{\DG}\qquad\forall (\bm u,p)\in \bm V_h\times Q_h;
\]
here, $C>0$ is a constant independent of $\nu$, $k$, $l$, and the aspect
ratio of the anisotropic elements.

This result can be made stronger if a discrete inf-sup
condition on the form $B_h$ on geometric edge meshes is assumed: Let $\T$ be a geometric edge mesh on $\Omega$ as defined in Section~\ref{sc:meshes}, with refinement ratio~$\sigma \in (0,1)$, and $\ell\ge1$ layers of refinement. Suppose that there exist constants $\varkappa>0$ and~$\rho\ge 0$ that may depend on $\sigma$, $\gamma$, and on the macro-element mesh $\mathcal T^0$, but are independent of $k$, $\ell$, and the aspect ratio of the anisotropic elements in $\T$, such that there holds
\begin{equation}\label{eq:disc_infsup}
 \gamma_B:= \inf_{0 \not\equiv q \in Q_h} \sup_{\uu 0\not\equiv \uu v \in
\uu V_h}
  \frac{B_h(\uu v,q)}{\N{\uu v}_h\N{q}_{L^2(\Omega)}} \ge \varkappa k^{-\rho},
\end{equation}
as~$k\to\infty$. 

\begin{remark}\label{rm:rho}
In~\cite{Schoetzau:2004:MHP} it was proved that this assumption is fulfilled with $\rho=\nicefrac{3}{2}$ (and any~$k\ge2$). Our numerical computations in Section~\ref{sec:CompInfSupA} below indicate, however, that the dependence of the right-hand side of~\eqref{eq:disc_infsup} on $k$ is much weaker than $k^{-\nicefrac{3}{2}}$.
\end{remark}

The following result, which implies the well-posedness of~\eqref{eq:mixedDG} even in the incompressible limit~$\nu=\nicefrac12$, follows immediately from~\cite[Theorem~5.1]{wihlerWirz}.

\begin{theorem}\label{Thm:disc_infsupa}
Let~$\nu\in(0,\nicefrac12]$. If~\eqref{eq:disc_infsup} holds true, then we have the inf-sup condition
\begin{equation}\label{eq:disc_infsupa}
\gamma_a:=\inf_{\choose{(\bm u,p)\in\bm V_h\times Q_h}{(\bm u,p)\neq(\bm 0,0)}}
\sup_{\choose{(\bm v,q)\in\bm V_h\times Q_h}{(\bm v,q)\neq(\bm 0,0)}}
\frac{a_h(\uu u, p; \uu v, q)}{\tnorm{(\uu u,p)}_{\DG} 
\tnorm{(\uu v,q)}_{\DG}} \ge C\max\{k^{-2\rho},1-2\nu\},
\end{equation}
with a constant $C>0$ that depends on the penalty parameter~$\gamma$, however, is independent of $\nu$, $k$, $l$, and the aspect ratio of the anisotropic elements.
\end{theorem}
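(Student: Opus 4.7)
I would use a standard Brezzi-type saddle-point argument. Given any nonzero $(\bm u, p)\in\bm V_h\times Q_h$, the plan is to construct an explicit test pair $(\bm v,q)\in\bm V_h\times Q_h$ realising the inf-sup in~\eqref{eq:disc_infsupa} by combining the diagonal choice $(\bm u, p)$ -- which controls $\N{\bm u}_h^2$ and $(1-2\nu)\N{p}_{L^2(\Omega)}^2$ via the coercivity estimate recalled in the excerpt -- with a perturbation of the velocity that extracts the $\gamma_B^2\N{p}_{L^2(\Omega)}^2$ contribution from assumption~\eqref{eq:disc_infsup}.

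\textbf{Construction of the perturbation.} Using~\eqref{eq:disc_infsup}, I pick $\bm w\in\bm V_h$ scaled so that $\N{\bm w}_h=\N{p}_{L^2(\Omega)}$ and $-B_h(\bm w,p)\ge\gamma_B\N{p}_{L^2(\Omega)}^2$ (flipping the sign of $\bm w$ if necessary). Setting $(\bm v,q)=(\bm u-\alpha\bm w,p)$ for a parameter $\alpha>0$ to be fixed, a direct expansion of~\eqref{eq:formah} (the two $B_h(\bm u,p)$ contributions cancel) yields
\begin{equation*}
 a_h(\bm u,p;\bm u-\alpha\bm w,p)
 = A_h(\bm u,\bm u)-\alpha A_h(\bm u,\bm w)-\alpha B_h(\bm w,p)+(1-2\nu)\N{p}_{L^2(\Omega)}^2.
\end{equation*}

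\textbf{Lower bound and choice of $\alpha$.} The coercivity of $A_h$ on~$\bm V_h$ (a by-product of the coercivity of $a_h$ recalled in the excerpt, valid for $\gamma$ sufficiently large) gives $A_h(\bm u,\bm u)\ge C_1\N{\bm u}_h^2$, while standard DG continuity provides $|A_h(\bm u,\bm w)|\le C_A\N{\bm u}_h\N{\bm w}_h=C_A\N{\bm u}_h\N{p}_{L^2(\Omega)}$. Absorbing the cross term through Young's inequality and choosing $\alpha\simeq\gamma_B$, a short computation leads to
\begin{equation*}
 a_h(\bm u,p;\bm u-\alpha\bm w,p)
 \;\ge\; \tfrac{C_1}{2}\N{\bm u}_h^2+\bigl(c\,\gamma_B^2+(1-2\nu)\bigr)\N{p}_{L^2(\Omega)}^2
 \;\ge\; C\,\max\{\gamma_B^2,\,1-2\nu\}\,\tnorm{(\bm u,p)}_{\DG}^2,
\end{equation*}
where the last step uses $\gamma_B,\,1-2\nu\le 1$ together with the definition $\tnorm{(\bm u,p)}_{\DG}^2=\N{\bm u}_h^2+(2-2\nu)\N{p}_{L^2(\Omega)}^2$ and $2-2\nu\in[1,2]$.

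\textbf{Control of the test-pair norm and conclusion.} The identity $\N{\bm w}_h=\N{p}_{L^2(\Omega)}$ together with $2-2\nu\ge 1$ produces $\tnorm{(\bm u-\alpha\bm w,p)}_{\DG}\le C'\tnorm{(\bm u,p)}_{\DG}$ uniformly in~$\nu$ (with $\alpha\le 1$). Dividing the previous lower bound by this norm and invoking $\gamma_B\ge\varkappa k^{-\rho}$ from~\eqref{eq:disc_infsup} then yields~\eqref{eq:disc_infsupa}. I expect the main technical point to be the careful tracking of constants guaranteeing $\nu$-robustness in the incompressible limit $\nu\to\nicefrac12$: this rests precisely on the two-sided bound $1\le2-2\nu\le 2$, which prevents the weight on $\N{p}_{L^2(\Omega)}^2$ in the $\DG$-norm from degenerating, and on keeping the size $\alpha$ of the perturbation controlled independently of~$\nu$ and~$k$ (beyond the factor $\gamma_B$ that ultimately supplies the $k^{-\rho}$ dependence).
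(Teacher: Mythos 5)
Your argument is correct and is essentially the standard Brezzi-type construction (testing with $(\bm u-\alpha\bm w,p)$, where $\bm w$ is a scaled supremizer for $B_h$ against $p$, then absorbing the cross term via Young's inequality and using $1\le 2-2\nu\le 2$); the paper itself gives no proof but defers to \cite[Theorem~5.1]{wihlerWirz}, whose proof follows exactly this route. The only points worth polishing are that the boundedness of the perturbation parameter $\alpha\simeq\gamma_B$ follows from the continuity of $B_h$ (so $\gamma_B\lesssim 1$), and that the coercivity $A_h(\bm u,\bm u)\ge C_1\N{\bm u}_h^2$ and the continuity $|A_h(\bm u,\bm w)|\le C_A\N{\bm u}_h\N{\bm w}_h$ with constants independent of $k$ and the element aspect ratios rely on the anisotropic, $k^2$-scaled penalty in~\eqref{eq:c} together with sufficiently large $\gamma$, exactly as recalled in Section~\ref{sec:stability}.
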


We emphasize that, for fixed~$k$, the stability bound~\eqref{eq:disc_infsupa} does not deteriorate as~$\nu\to\nicefrac12$.

%%%%%%%%%%%%%%%%%%%%%%%%%%%%%%%%%%%%%%%%%%%%%%%%%%%%%%%%%%%%%%%%%%%%

\section{Computing the DG solution and the inf-sup constants}\label{sec:AspectsNumComp}

Our goal is to investigate the behavior of the inf-sup conditions from~\eqref{eq:disc_infsup} and~\eqref{eq:disc_infsupa} numerically. We note that both of them involve the discrete space~$Q_h$ from~\eqref{eq:DGspaces}. Due to the \emph{global} zero mean constraint contained in~$L^2_0(\Omega)$, the construction of~$Q_h$ in terms of \emph{standard local} basis functions, as provided by most finite element packages, causes difficulties. The classical remedy is to use the full space
\begin{equation}\label{eq:wtQ}
\wt{Q}_h:=\{\, q\in L^2(\Omega) : q|_K \in {\mathcal Q}_{k-1}(K),\ K\in\T \, \},
\end{equation}
and to impose the zero mean condition by means of a Lagrange multiplier technique. Noticing that $\dim(Q_h) = \dim(\wt{Q}_h)-1$,
we emphasize that this approach is of equivalent computational cost as the original DG system~\eqref{eq:mixedDG}, yet, it allows to employ standard discrete spaces. This, in turn, leads to a more convenient practical framework for the computation of the DG solution and the evaluation of inf-sup constants.

\subsection{Reformulation of the mixed DG discretization}\label{sec:ReformulatedSyst}

We rewrite the original system~\eqref{eq:mixedDG}, which is based on the discrete DG space~$\bm V_h\times Q_h$, on the new space~$\bm V_h\times\wt Q_h\times\mathbb{R}$, where~$\wt Q_h$ is the full space from~\eqref{eq:wtQ}. To this end, we introduce an auxiliary variable $\wt{r}\in\mathbb{R}$ which takes the role of the mean value of the pressure~$p_h$ on~$\Omega$. More precisely, let us consider the following augmented DG formulation: Find $(\wt{\uu u}_h, \wt{p}_h, \wt{r})\in
\uu V_h\times \wt{Q}_h\times \IR$ such that
\begin{equation}\label{eq:reform_mixedDG}
\begin{split}
A_h(\wt{\uu u}_h,\wt{\uu v})+B_h(\wt{\uu v},\wt{p}_h)&=\int_\Omega\uu f\cdot\wt{\uu
v}\,\dx, \\
-B_h(\wt{\uu u}_h,\wt{q}) + C_{h}(\wt{p}_h,\wt{q}) -
\wt{r}\int_{\Omega}\wt{q}\,\dx&=0,\\
\wt{s}\fint_{\Omega}\wt{p}_h\,\dx - \wt{r}\wt{s} &= 0,
\end{split}
\end{equation}
for all $(\wt{\uu v}, \wt{q}, \wt{s})\in \uu V_h\times \wt{Q}_h\times \IR$. Here
we use the notation
\begin{equation*}
\fint_{\Omega}(\cdot)\,\dx:=\frac{1}{\abs{\Omega}}\int_{\Omega}(\cdot)\,\dx
\end{equation*}
to denote the mean value integral on~$\Omega$. Note also that this new system may be written in a more compact way: Find
$(\wt{\uu u}_h,\wt{p}_h, \wt{r})\in \uu V_h\times \wt{Q}_h\times \IR$ such that
\[
\wt a_h(\wt{\uu u}_h, \wt{p}_h, \wt{r}; \wt{\uu v},\wt{q},\wt{s})=\int_\Omega\uu
f\cdot\wt{\uu v}\,\dx\qquad
\forall (\wt{\uu v}, \wt{q}, \wt{s})\in \uu V_h\times \wt{Q}_h\times \IR,
\]
where
\begin{align*}
\wt a_h(\wt{\uu u}_h, \wt{p}_h, \wt{r}; \wt{\uu v},\wt{q},\wt{s}) &:= A_h(\wt{\uu
u}_h,\wt{\uu v})+B_h(\wt{\uu v},\wt{p}_h)-B_h(\wt{\uu u}_h,\wt{q}) +
C_{h}(\wt{p}_h,\wt{q})\nonumber\\
&\quad - \wt{r}\int_{\Omega}\wt{q}\,\dx + \wt{s}\fint_{\Omega}\wt{p}_h\,\dx -
\wt{r}\wt{s}.
\end{align*}
We again stress the fact that this system can be expressed in terms of standard local basis functions, and, thereby, permits to apply a straightforward implementational setting.                                                                                                                                                                                                                                                      

To show the equivalence of the two formulations~\eqref{eq:mixedDG}
and~\eqref{eq:reform_mixedDG}, we require the following lemma. Here, we shall denote by $\mathcal Q_0(\Omega)\simeq\mathbb{R}$ the space of all (globally) constant functions on $\Omega$, and we note that
\begin{equation}\label{eq:quot}
\wt Q_h=Q_h\oplus\mathcal{Q}_0.
\end{equation}

\begin{lemma}\label{lem:kernelB}
The form $B_h$ from~\eqref{eq:DGforms} satisfies 
\begin{equation}\label{eq:Q0}
B_h(\uu v,q) = 0\qquad\forall (\bm v,q)\in\bm V_h\times\mathcal Q_0(\Omega). 
\end{equation}
Conversely, if, for given~$q\in\wt Q_h$, there holds that~$B_h(\uu v,q) = 0$ for any~$\bm v\in\bm V_h$, then it follows that~$q\in\mathcal Q_0(\Omega)$.
\end{lemma}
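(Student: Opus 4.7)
The plan is to split the proof into the two implications stated in the lemma.

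For the forward direction, I would establish $B_h(\bm v, c) = 0$ for any constant $c \in \mathcal Q_0(\Omega)$ via elementwise integration by parts. Since $\mvl{c} = c$ on every face, the form reduces to
\[
B_h(\bm v, c) = -c \int_\Omega \nabla_h \cdot \bm v \,\dx + c \int_\F \jmp{\bm v}\,\ds.
\]
Applying the divergence theorem on each $K \in \T$ and regrouping the resulting boundary integrals face-by-face yields $\int_\Omega \nabla_h \cdot \bm v \,\dx = \int_\F \jmp{\bm v}\,\ds$: on an interior face $f = \partial K^\sharp \cap \partial K^\flat$ the contributions from the two adjacent elements combine via opposite outer normals to reproduce exactly $\jmp{\bm v}$, while on a boundary face $\bm v \cdot \bm n_\Omega$ coincides with $\jmp{\bm v}$ by the definition given for boundary traces. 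The two terms in $B_h(\bm v, c)$ therefore cancel.

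For the converse, the plan is to exploit the direct sum decomposition \eqref{eq:quot}. Given $q \in \wt Q_h$ with $B_h(\bm v, q) = 0$ for all $\bm v \in \bm V_h$, write $q = q_0 + c$, where $c := \fint_\Omega q\,\dx \in \mathcal Q_0(\Omega)$ and $q_0 := q - c \in Q_h$. The first part of the lemma gives $B_h(\bm v, c) = 0$, so $B_h(\bm v, q_0) = B_h(\bm v, q) = 0$ for all $\bm v \in \bm V_h$. The discrete inf-sup hypothesis \eqref{eq:disc_infsup} then forces $\N{q_0}_{L^2(\Omega)} = 0$, and hence $q = c \in \mathcal Q_0(\Omega)$.

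The main obstacle is conceptual rather than computational: in the first part the careful orientation bookkeeping for the face integrals has to be checked, in particular that on $\partial\Omega$ the vector jump is defined with respect to $\bm n_\Omega$ so that the boundary pieces of the elementwise divergence theorem line up correctly with $\int_\F \jmp{\bm v}\,\ds$. The converse direction hinges entirely on the assumed inf-sup condition \eqref{eq:disc_infsup}; without it, one could not a priori exclude spurious discrete kernel elements of $B_h(\cdot,\cdot)$ beyond the constants.
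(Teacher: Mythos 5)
Your proposal is correct and follows essentially the same route as the paper: the forward direction is the elementwise Gauss--Green identity with the face-by-face regrouping of normal traces into $\jmp{\bm v}$, and the converse applies the assumed inf-sup condition~\eqref{eq:disc_infsup} to $q-\fint_\Omega q\,\dx\in Q_h$ to rule out any nonconstant kernel element. The only cosmetic difference is that the paper phrases the converse as a contradiction while you argue directly via the decomposition~\eqref{eq:quot}; the content is identical.
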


\begin{proof}
Given $(\uu v, q)\in \uu V_h\times \mathcal Q_0(\Omega)$. Since~$\mathcal Q_0(\Omega)$ is one-dimensional we may, without loss of generality, suppose that~$q\equiv 1$. Then, we have
\begin{equation*}
B_h(\uu{v},q) = -\sum_{K\in\T}\int_{K} \nabla_h\cdot\uu{v} \, \dx +
\int_{\F} \jmp{\uu{v}} \, \ds . 
\end{equation*}
By applying the Gauss-Green theorem on each element $K\in\T$, we obtain two expressions
which are identical, and, thus, cancel out:
\begin{equation*}
B_h(\uu{v},q) = -\sum_{K\in\T}\int_{\partial K} \uu{v}\cdot\uu n_K \,
\ds + \int_{\F} \jmp{\uu{v}} \, \ds  = 0.
\end{equation*}

Let now~$q\in\wt Q_h$, with~$q-\fint_{\Omega}q\,\dx\neq 0$, and~$B_h(\bm v,q)=0$ for all~$\bm v\in\bm V_h$. Then, the inf-sup condition~\eqref{eq:disc_infsup} implies that
\[
0<\gamma_B\le\sup_{\bm 0\neq\bm v\in\bm V_h}\frac{B_h\left(\bm v,q-\fint_{\Omega}q\,\dx\right)}{\|\bm v\|_h\NN{q-\fint_{\Omega}q\,\dx}_{L^2(\Omega)}}
=\sup_{\bm 0\neq\bm v\in\bm V_h}\frac{-B_h\left(\bm v,\fint_{\Omega}q\,\dx\right)}{\|\bm v\|_h\NN{q-\fint_{\Omega}q\,\dx}_{L^2(\Omega)}}.
\]
Applying~\eqref{eq:Q0} yields a contradiction, and, consequently, we deduce that $q-\fint_{\Omega}q\,\dx=0$. Thus, we have $q\equiv\fint_{\Omega}q\,\dx\in\mathcal{Q}_0$. This completes the proof.
\end{proof}

Now we can state the equivalence of the two formulations.
\begin{proposition}
The augmented DG discretization from~\eqref{eq:reform_mixedDG} has a
unique solution $(\wt{\uu u}_h, \wt{p}_h, 0)\in
\uu V_h\times \wt{Q}_h\times \IR$, and $(\wt{\uu u}_h, \wt{p}_h)$ is the solution of
the original DG formulation~\eqref{eq:mixedDG} with $\wt{p}_h\in Q_h$.
\end{proposition}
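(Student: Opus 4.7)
The plan is to show existence and uniqueness separately, exploiting the decomposition $\wt Q_h = Q_h \oplus \mathcal Q_0(\Omega)$ from~\eqref{eq:quot} and Lemma~\ref{lem:kernelB}.

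For existence, I would start from the unique solution $(\bm u_h, p_h) \in \bm V_h \times Q_h$ of the original system~\eqref{eq:mixedDG}, whose existence is guaranteed by Theorem~\ref{Thm:disc_infsupa}. Then I would propose the candidate $(\wt{\bm u}_h, \wt p_h, \wt r) := (\bm u_h, p_h, 0)$ and verify the three equations of~\eqref{eq:reform_mixedDG}. The first equation is identical to the first equation of~\eqref{eq:mixedDG}. For the second equation, I would test against an arbitrary $\wt q \in \wt Q_h$ and split $\wt q = q + c$ with $q \in Q_h$ and $c \in \mathcal Q_0(\Omega)$: the $Q_h$-part reproduces the second equation of~\eqref{eq:mixedDG}, whereas for the constant part Lemma~\ref{lem:kernelB} kills $B_h(\bm u_h, c)$, and $C_h(p_h, c) = (1-2\nu) c \int_\Omega p_h\,\dd x = 0$ since $p_h \in Q_h$ has vanishing mean; with $\wt r = 0$ the right-hand side vanishes too. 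The third equation with $\wt r = 0$ reduces to $\wt s \fint_\Omega p_h \,\dd x = 0$, which again holds because $p_h$ has zero mean.

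For uniqueness, I would take an arbitrary solution $(\wt{\bm u}_h, \wt p_h, \wt r)$ of the homogeneous version of~\eqref{eq:reform_mixedDG} (with $\bm f \equiv 0$) and show it vanishes. Testing the second equation with the constant $\wt q \equiv 1 \in \mathcal Q_0(\Omega)$, Lemma~\ref{lem:kernelB} eliminates $B_h(\wt{\bm u}_h, \wt q)$, yielding $(1-2\nu) \int_\Omega \wt p_h \,\dd x - \wt r\, |\Omega| = 0$. Testing the third equation with $\wt s = 1$ gives $\fint_\Omega \wt p_h\,\dd x = \wt r$. Combining the two identities produces $2\nu\, \wt r = 0$, hence $\wt r = 0$ (since $\nu \in (0, \nicefrac12]$), and therefore $\wt p_h$ has zero mean, i.e.\ $\wt p_h \in Q_h$. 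With these facts, the first two equations of~\eqref{eq:reform_mixedDG} tested on $(\wt{\bm v}, \wt q) \in \bm V_h \times Q_h \subset \bm V_h \times \wt Q_h$ coincide with the homogeneous version of~\eqref{eq:mixedDG}. Well-posedness of~\eqref{eq:mixedDG}, as assured by Theorem~\ref{Thm:disc_infsupa}, forces $(\wt{\bm u}_h, \wt p_h) = (\bm 0, 0)$, which concludes uniqueness by linearity.

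The only mildly delicate step is the elimination of $\wt r$ from the constant-mode equations: for $\nu \in (0, \nicefrac12)$ it follows from a small linear algebra argument combining the two scalar identities, whereas for $\nu = \nicefrac12$ the term $C_h(\wt p_h, 1)$ vanishes directly and $\wt r = 0$ is read off immediately. Everything else is bookkeeping via Lemma~\ref{lem:kernelB}, the decomposition~\eqref{eq:quot}, and Theorem~\ref{Thm:disc_infsupa}.
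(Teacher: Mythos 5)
Your proposal is correct and follows essentially the same route as the paper: first deduce $\wt{r}=0$ and the zero mean of $\wt{p}_h$ from the constant test functions $\wt{q}\equiv 1$ and $\wt{s}=1$ together with Lemma~\ref{lem:kernelB}, then verify the three equations for the candidate $(\uu u_h,p_h,0)$ by splitting $\wt{q}$ via~\eqref{eq:quot}, and finally reduce uniqueness to the well-posedness of~\eqref{eq:mixedDG}. The only cosmetic difference is that you argue uniqueness through the homogeneous system rather than through the difference of two solutions, which is equivalent by linearity.
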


\begin{proof}
We proceed in three steps.

\subsubsection*{Step~1:} We first show that the new formulation enforces the pressure
$\wt{p}_h$ to have zero mean. To this end, we choose the test variable to be
$\wt{s}=1$. Thus, from the third equation of~\eqref{eq:reform_mixedDG}, we deduce that
\begin{equation*}
\fint_\Omega \wt{p}_h\,\dx = \wt{r}.
\end{equation*}
Furthermore, let us choose  $\wt{q}\equiv 1\in\mathcal{Q}_0$. From the second equation
of~\eqref{eq:reform_mixedDG}, and with the aid of~\eqref{eq:Q0}, we infer
that
\begin{equation*}
0 = (1-2\nu)\int_\Omega \wt{p}_h\,\dx - \Bigg(\fint_\Omega \wt{p}_h\,\dx\Bigg)
\Bigg(\int_\Omega 1\,\dx\Bigg) =
-2\nu\int_\Omega \wt{p}_h\,\dx,
\end{equation*}
i.e. $\fint_\Omega \wt{p}_h\,\dx = \wt{r} = 0$, since $\nu\neq 0$.

\subsubsection*{Step~2:} Next, we show that $(\wt{\uu u}_h, \wt p_h, \wt r):=(\uu u_h, p_h,
0)\in \uu V_h\times Q_h\times \IR$, where $(\uu u_h, p_h)$ is the solution
from~\eqref{eq:mixedDG}, solves~\eqref{eq:reform_mixedDG}. The first and last
equation in~\eqref{eq:reform_mixedDG} are clearly fulfilled with $(\wt{\uu u}_h,
\wt p_h, \wt r)=(\uu u_h, p_h,0)$. The second equation
in~\eqref{eq:reform_mixedDG} simplifies to
\begin{equation*}
-B_h(\uu u_h,\wt{q}) + C_{h}(p_h,\wt{q})=0,\qquad\forall\wt{q}\in\wt{Q}_h.
\end{equation*}
To show that this equation does indeed hold true, we notice that
\begin{align*}
 -B_h(\uu u_h,\wt{q}) + C_{h}(p_h,\wt{q}) &= - B_h\left(\uu u_h,\fint_\Omega
\wt{q}\,\dx\right) - B_h\left(\uu u_h,\wt{q}-\fint_\Omega \wt{q}\,\dx\right)\\
&\quad + C_{h}\left(p_h,\fint_\Omega \wt{q}\,\dx\right) + C_{h}\left(p_h,\wt{q} -
\fint_\Omega \wt{q}\,\dx\right).
\end{align*}
Due to~\eqref{eq:Q0}, the first term on the right-hand side is zero.
For the second term, since $\wt{q}-\fint_\Omega \wt{q}\in Q_h$, it holds
\begin{equation*}
B_h\left(\uu u_h,\wt{q}-\fint_\Omega \wt{q}\,\dx\right) = C_{h}\left(p_h,\wt{q} -
\fint_\Omega \wt{q}\,\dx\right),
\end{equation*}
simply by the second equation in~\eqref{eq:mixedDG}. Hence, we end up with
\begin{align*}
 -B_h(\uu u_h,\wt{q}) + C_{h}(p_h,\wt{q}) &= C_{h}\left(p_h,\fint_\Omega
\wt{q}\,\dx\right)\\
&= (1-2\nu)
\Bigg(\int_\Omega p_h\,\dx\Bigg)\Bigg(\fint_\Omega\wt{q}\,\dx\Bigg)=0,
\end{align*}
where we have used the fact that $p_h$ has zero mean. Thus, all three equations
from~\eqref{eq:reform_mixedDG} are fulfilled with $(\wt{\uu u}_h, \wt p_h, \wt
r)=(\uu u_h, p_h, 0)$ from above.

\subsubsection*{Step~3:} It remains to show that the solution
of~\eqref{eq:reform_mixedDG} is unique. To this end, we assume that there exist
two solutions $(\wt{\uu u}_{h1}, \wt{p}_{h1}, 0),(\wt{\uu u}_{h2}, \wt{p}_{h2},
0)\in \uu V_h\times Q_h\times \IR$. Using again~\eqref{eq:Q0} as well as the fact that the pressures have zero mean, the second equation
in~\eqref{eq:reform_mixedDG} implies that
\begin{align*}
0 &= - B_h\left(\wt{\uu u}_{h1} - \wt{\uu u}_{h2},\wt{q} - \fint_\Omega
\wt{q}\,\dx\right) - B_h\left(\wt{\uu u}_{h1} - \wt{\uu u}_{h2},\fint_\Omega
\wt{q}\,\dx\right)\\
&\quad + C_{h}\left(\wt{p}_{h1} - \wt{p}_{h2},\wt{q} - \fint_\Omega
\wt{q}\,\dx\right) + C_{h}\left(\wt{p}_{h1} - \wt{p}_{h2},\fint_\Omega
\wt{q}\,\dx\right)\\
&= - B_h\left(\wt{\uu u}_{h1} - \wt{\uu u}_{h2},\wt{q} - \fint_\Omega
\wt{q}\,\dx\right) + C_{h}\left(\wt{p}_{h1} - \wt{p}_{h2},\wt{q} -
\fint_\Omega \wt{q}\,\dx\right).
\end{align*}
Therefore, we get the following system for the difference $(\wt{\uu u}_{h1} - \wt{\uu u}_{h2}, \wt{p}_{h1} - \wt{p}_{h2})\in \uu V_h\times Q_h$ of the two
solutions: 
\begin{equation*}
\begin{split}
A_h(\wt{\uu u}_{h1} - \wt{\uu u}_{h2},\wt{\uu v})+B_h(\wt{\uu v},\wt{p}_{h1} -
\wt{p}_{h2})&=0, \\
-B_h\left(\wt{\uu u}_{h1} - \wt{\uu u}_{h2},\wt{q} - \fint_\Omega \wt{q}\,\dx\right)
+ C_{h}\left(\wt{p}_{h1} - \wt{p}_{h2},\wt{q} - \fint_\Omega \wt{q}\,\dx\right) &=0,
\end{split}
\end{equation*}
for all $(\wt{\uu v}, \wt{q} - \fint_\Omega \wt{q}\,\dx)\in \uu V_h\times Q_h$. This, in turn, is just the mixed formulation~\eqref{eq:mixedDG} with the unique zero solution.
\end{proof}

\subsection{Inf-sup constant of the form $B_h$}
We will now discuss how to compute the inf-sup constant~$\gamma_B$ from~\eqref{eq:disc_infsup} numerically. To do so, we proceed along the lines of~\cite[Chapter~II.3.2]{BrezziFortin91}. Let us choose two sets of basis functions $\{\uu \phi_i\}_{i=1}^M\subset\bm V_h$ and $\{\psi_j\}_{j=1}^N\subset\wt Q_h$, with~$M=\dim(\bm V_h)$ and~$N=\dim(\wt Q_h)$. For any~$\bm v\in V_h$ and~$q\in\wt Q_h$, we store the associated coefficients in two vectors~$\uuu v := (v_1,\dots,v_M)^\top$ and $\uuu q :=(q_1,\dots,q_N)^\top$, respectively. Counting degrees of freedom in each element, we remark that
\begin{equation}\label{eq:MN}
M=\frac{3(k+1)^3}{k^3}N>3N.
\end{equation}
Furthermore, we define the matrix~$\uuu B \in \IR^{M\times N}$ by
\[
B_{ij}:=B_h(\uu\phi_i,\psi_j),\qquad 1\le i\le M,\, 1\le j\le N.
\]
Due to Lemma~\ref{lem:kernelB}, we notice that~$q\in\mathcal{Q}_0$ if and only if the associated coefficient vector~$\uuu q$ satisfies $\uuu q\in\ker(\uuu B)$. Moreover, by virtue of~\eqref{eq:quot}, we conclude that $q\in Q_h$ if and only if $\uuu q\in\mathbb{R}^N/\ker(\uuu B)$. Moreover, since~$\dim(\mathcal{Q}_0)=1$, it follows that $\dim(\ker(\uuu B))=1$, and, in view of~\eqref{eq:MN},
\begin{equation}\label{eq:rank}
\rank(\uuu B)=\rank(\uuu B^\top)=N-1.
\end{equation}

Let us further introduce the symmetric positive definite matrices~$\uuu D\in \IR^{M\times M}$ and~$\uuu E\in\IR^{N\times N}$ corresponding to the norms~$\|\cdot\|_h$ and~$\|\cdot\|_{L^2(\Omega)}$, respectively, through
\[
\|\bm v\|_h^2=\uuu v^\top\uuu D\uuu v,\qquad
\|q\|^2_{L^2(\Omega)}=\uuu q^\top\uuu E\uuu q.
\]
Taking into account our considerations above, we infer that
\begin{align*}
\gamma_B&=  \inf_{\uuu 0\neq
\uuu q \in \mathbb{R}^N/\ker(\uuu B)}\sup_{\uuu 0\neq\uuu v\in\mathbb{R}^M}\frac{\uuu v^\top \uuu B \,\uuu
q}{(\uuu v^\top \uuu D\, \uuu v)^{\nicefrac12}(\uuu q^\top \uuu E\, \uuu
q)^{\nicefrac12}}\\
&=  \inf_{\uuu 0\neq
\uuu q \in \mathbb{R}^N/\ker(\uuu B)} \sup_{\uuu 0\neq\uuu v\in\mathbb{R}^M/\ker(\uuu B^\top)}\frac{\uuu v^\top \uuu B \,\uuu
q}{(\uuu v^\top \uuu D\, \uuu v)^{\nicefrac12}(\uuu q^\top \uuu E\, \uuu
q)^{\nicefrac12}}.
\end{align*}
To proceed, we define
\begin{equation}\label{eq:BSmatrix}
\wt{\uuu B} := \uuu D^{-\nicefrac12}\uuu B \uuu E^{-\nicefrac12}.
\end{equation}
Let the singular value decomposition (SVD) of $\wt{\uuu B}$ be given by
\[
\wt{\uuu B} =: \wt{\uuu V} \uuu\Sigma \wt{\uuu Q}^\top,
\]
with the singular values
$\sigma_1\ge\dots\ge\sigma_{N-1}>\sigma_{N}=0$, cf.~\eqref{eq:rank}, being
contained on the diagonal of~$\uuu\Sigma$, and orthogonal matrices $\wt{\uuu
V}$ and $\wt{\uuu Q}$ with columns $\wt{\uuu v}_1,\dots,\wt{\uuu v}_M$ and $\wt{\uuu
q}_1,\dots,\wt{\uuu q}_N$, respectively. In addition, we set
\begin{equation}\label{eq:Relation}
\wh{\uuu v}_i := \uuu D^{-\nicefrac12}\,\wt{\uuu v}_i \quad \forall\,
i\in\{1,\dots,M\},\qquad
\wh{\uuu q}_i := \uuu E^{-\nicefrac12}\,\wt{\uuu q}_i \quad \forall
\,i\in\{1,\dots,N\}.
\end{equation}
For $i\in\{1,\dots,N\}$ we then conclude
\begin{equation*}
\uuu B\,\wh{\uuu q}_i = \uuu D^{\nicefrac12}\wt{\uuu B}\uuu E^{\nicefrac12}\,\wh{\uuu q}_i
= \uuu D^{\nicefrac12}\wt{\uuu V} \uuu\Sigma \wt{\uuu Q}^\top\uuu E^{\nicefrac12}\,\wh{\uuu
q}_i
= \uuu D^{\nicefrac12}\wt{\uuu V} \uuu\Sigma \wt{\uuu Q}^\top\,\wt{\uuu q}_i
= \uuu D^{\nicefrac12}\wt{\uuu V} \uuu\Sigma \,\uuu e_i,
\end{equation*}
where $\uuu e_i$ is the $i$th standard unit vector in~$\IR^N$. Hence, we
have
\begin{equation}\label{eq:Bq}
\uuu B\,\wh{\uuu q}_i
= \sigma_i \uuu D^{\nicefrac12}\wt{\uuu V} \,\wt{\uuu e}_i
= \sigma_i \uuu D^{\nicefrac12}\,\wt{\uuu v}_i
= \sigma_i \uuu D\,\wh{\uuu v}_i,
\end{equation}
where $\wt{\uuu e}_i$ is the $i$th standard unit vector in~$\IR^M$.
Involving~\eqref{eq:Relation}, we deduce that
\begin{equation}\label{eq:Orthogonality}
\begin{split}
\wh{\uuu v}^\top_i\uuu D\,\wh{\uuu v}_j&=\delta_{ij} \qquad \forall\,
i,j\in\{1,\dots,M\},\\
\wh{\uuu q}^\top_i\uuu E\,\wh{\uuu q}_j&=\delta_{ij} \qquad \forall\,
i,j\in\{1,\dots,N\}.
\end{split}
\end{equation}
Given linear combinations
\begin{equation}\label{eq:LinComb}
\uuu v =: \sum_{i=1}^{N-1} \alpha_i \,\wh{\uuu v}_i \in \mathbb{R}^M/\ker(
\uuu B^\top),\qquad
\uuu q =: \sum_{j=1}^{N-1} \beta_j \,\wh{\uuu q}_j \in \mathbb{R}^N/\ker(
\uuu B),
\end{equation}
and using~\eqref{eq:Bq} and~\eqref{eq:Orthogonality}, we obtain
\[
\uuu v^\top \uuu B \,\uuu q = \sum_{i,j=1}^{N-1} \alpha_i \beta_j \,\wh{\uuu v}_i^\top \uuu B
\,\wh{\uuu
q}_j
= \sum_{i,j=1}^{N-1} \sigma_j \alpha_i \beta_j \,\wh{\uuu v}^\top_i \uuu D\,\wh{\uuu v}_j
= \sum_{i=1}^{N-1} \sigma_i \alpha_i \beta_i.
\]
Moreover, employing~\eqref{eq:Orthogonality} and~\eqref{eq:LinComb}, the norms are represented by
\begin{align*}
\N{\uu v}_h&=\big(\uuu v^\top \uuu D \,\uuu v\big)^{\nicefrac12} =
\Bigg(\sum_{i=1}^{N-1}
\alpha_i^2\Bigg)^{\nicefrac12},&
\N{q}_{L^2(\Omega)}&=\big(\uuu q^\top \uuu E \,\uuu q\big)^{\nicefrac12} =
\Bigg(\sum_{i=1}^{N-1}
\beta_i^2\Bigg)^{\nicefrac12}.
\end{align*}

We will now evaluate the inf-sup constant
\begin{equation}\label{eq:gamma_B}
\gamma_B = \inf_{
\uu\beta\neq\uu0} \sup_{\uu\alpha\neq\uu0}\frac{\sum_{i=1}^{N-1} \sigma_i \alpha_i
\beta_i}{\Big(\sum_{i=1}^{N-1} \alpha_i^2\Big)^{\nicefrac12}\Big(\sum_{i=1}^{N-1}
\beta_i^2\Big)^{\nicefrac12}},
\end{equation}
with $\uu\alpha := (\alpha_1,\dots,\alpha_{N-1})$ and $\uu\beta :=
(\beta_1,\dots,\beta_{N-1})$.
Without loss of generality, we may suppose that $\N{\uu v}_h=\big(\sum_{i=1}^{N-1}
\alpha_i^2\big)^{\nicefrac12}=1$, and $\N{q}_{L^2(\Omega)}=\big(\sum_{i=1}^{N-1}
\beta_i^2\big)^{\nicefrac12}=1$. Then, \eqref{eq:gamma_B} simplifies to
$\gamma_B = \inf_{\uu\beta} \sup_{\uu\alpha}\sum_{i=1}^{N-1} \sigma_i
\alpha_i\beta_i$. 
By applying the Cauchy-Schwarz inequality, that is,
\begin{equation*}
\sum_{i=1}^{N-1} \sigma_i \alpha_i\beta_i\le \left(\sum_{i=1}^{N-1}
\alpha_i^2\right)^{\nicefrac12} \left(\sum_{i=1}^{N-1}
\sigma_i^2\beta_i^2\right)^{\nicefrac12}
=\left(\sum_{i=1}^{N-1} \sigma_i^2\beta_i^2\right)^{\nicefrac12},
\end{equation*}
we observe that the supremum is attained for
$\alpha_i = \big(\sum_{i=1}^{N-1}
\sigma_i^2\beta_i^2\big)^{-\nicefrac12}\sigma_i\beta_i$. This leads to
\begin{equation*}
\gamma_B = \inf_{\beta^\top\beta=1} \left(\sum_{i=1}^{N-1}
\sigma_i^2\beta_i^2\right)^{\nicefrac12}=\sigma_{N-1}.
\end{equation*}

\begin{proposition}
The inf-sup constant $\gamma_B$ from~\eqref{eq:disc_infsup} is given by
the smallest positive singular value, $\sigma_{N-1}$, of the matrix $\wt{\uuu B}$
from~\eqref{eq:BSmatrix}.
\end{proposition}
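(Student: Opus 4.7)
The plan is to recast the inf-sup ratio in~\eqref{eq:disc_infsup} as a finite-dimensional quantity in coordinates, diagonalize it via the singular value decomposition of the scaled matrix $\wt{\uuu B}$ from~\eqref{eq:BSmatrix}, and read off $\sigma_{N-1}$ as the optimum. Since the derivation in the text above the proposition already carries out all the key computations, the proof is essentially a matter of assembling those pieces into a clean argument.

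First I would fix bases $\{\bm\phi_i\}_{i=1}^M$ of $\bm V_h$ and $\{\psi_j\}_{j=1}^N$ of $\wt Q_h$ and assemble the matrix $\uuu B$ with entries $B_h(\bm\phi_i,\psi_j)$ together with the symmetric positive definite Gram matrices $\uuu D,\uuu E$ representing the norms $\|\cdot\|_h$ and $\|\cdot\|_{L^2(\Omega)}$. By Lemma~\ref{lem:kernelB}, $\ker(\uuu B)$ is precisely the one-dimensional subspace of coefficient vectors corresponding to $\mathcal Q_0(\Omega)$, and, via the decomposition~\eqref{eq:quot}, the physically meaningful pressure space $Q_h$ is identified with the quotient $\mathbb{R}^N/\ker(\uuu B)$. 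Passing to the transpose yields~\eqref{eq:rank}, and I would use this to restrict the supremum over $\bm v\in\bm V_h$ to $\mathbb{R}^M/\ker(\uuu B^\top)$ without altering the value of $\gamma_B$.

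Next I would consider the SVD $\wt{\uuu B}=\wt{\uuu V}\uuu\Sigma\wt{\uuu Q}^\top$ and introduce the rescaled vectors $\wh{\uuu v}_i,\wh{\uuu q}_i$ as in~\eqref{eq:Relation}. A direct computation, as carried out in the excerpt, yields the key diagonalization identity $\uuu B\,\wh{\uuu q}_i=\sigma_i\uuu D\,\wh{\uuu v}_i$ together with the $\uuu D$- and $\uuu E$-orthonormality in~\eqref{eq:Orthogonality}. Expanding any admissible $\bm v$ and $q$ in these bases (using only the indices $i=1,\dots,N-1$ with $\sigma_i>0$, since $\sigma_N=0$ by~\eqref{eq:rank}) reduces the ratio defining $\gamma_B$ to the purely scalar optimization problem~\eqref{eq:gamma_B} in the coordinate vectors $\uu\alpha,\uu\beta\in\mathbb{R}^{N-1}$, with the $h$- and $L^2$-norms becoming Euclidean norms.

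Finally I would normalize $|\uu\alpha|=|\uu\beta|=1$, apply Cauchy–Schwarz to the inner supremum, and observe that equality is attained by $\alpha_i\propto \sigma_i\beta_i$; the infimum over unit $\uu\beta$ then picks out the smallest nonzero $\sigma_i$, giving $\gamma_B=\sigma_{N-1}$. The only delicate point is bookkeeping the kernel/quotient identifications consistently so that the index range in the finite sums matches precisely the nonzero singular values; this is what Lemma~\ref{lem:kernelB} together with~\eqref{eq:rank} guarantees, and once this is in place the remainder is a one-line Cauchy–Schwarz argument.
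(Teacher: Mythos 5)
Your proposal is correct and follows essentially the same route as the paper, whose proof of this proposition is precisely the derivation preceding it: the identification of $Q_h$ with $\mathbb{R}^N/\ker(\uuu B)$ via Lemma~\ref{lem:kernelB} and~\eqref{eq:quot}, the SVD-based diagonalization $\uuu B\,\wh{\uuu q}_i=\sigma_i\uuu D\,\wh{\uuu v}_i$ with the orthonormality relations~\eqref{eq:Orthogonality}, and the reduction to the scalar problem~\eqref{eq:gamma_B} settled by Cauchy--Schwarz. There is nothing to add.
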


\subsection{Inf-sup constant of the form $a_h$}

In order to compute the inf-sup constant from~\eqref{eq:disc_infsupa}, we proceed analogously as in the previous section. To this end, we choose a basis~$\{\uu \chi_i\}_{i=1}^{M+N}$ of $\uu V_h\times\wt{Q}_h$, and define the system matrix~$\uuu M \in \IR^{(M+N)\times (M+N)}$ by
\begin{equation*}
M_{ij}:=a_h(\uu\chi_j,\uu\chi_i), \qquad 1\le i,j\le M+N,
\end{equation*}
with~$a_h$ from~\eqref{eq:formah}. For brevity, we
consider only the limit case $\nu=\nicefrac12$. Due to Lemma~\ref{lem:kernelB} and the coercivity of the form~$A_h$ from~\eqref{eq:mixedDG}, we conclude that $\uuu M$ has a one-dimensional kernel. Denoting by $\uuu D\in\IR^{(M+N)\times (M+N)}$ the symmetric positive matrix defining the~$\tnorm{\cdot}_{\DG}$-norm through
\[
\tnorm{(\bm u,p)}^2_{\DG}=\uuu v^\top\uuu D\uuu v,
\]
where~$\uuu v\in\mathbb{R}^{M+N}$ is the coefficient vector of a given pair~$(\bm u,p)\in\bm V_h\times\wt Q_h$ with respect to the basis~$\{\bm\chi_i\}_{i=1}^{M+N}$, we let
\begin{equation}\label{eq:M0SMatrix}
\wt{\uuu M} := \uuu D^{-\nicefrac12}\uuu M\uuu D^{-\nicefrac12}.
\end{equation}
Given the SVD of $\wt{\uuu M}$ by $\wt{\uuu M} =: \wt{\uuu X} \uuu\Sigma \wt{\uuu Y}^\top$, with the singular values
$\sigma_1\ge\dots\ge\sigma_{M+N-1}>\sigma_{M+N}=0$ being
contained on the diagonal of~$\uuu\Sigma$, and orthogonal matrices
$\wt{\uuu X}$ and $\wt{\uuu Y}$, we infer the following result.

\begin{proposition}
In the incompressible case~$\nu=\nicefrac12$, the inf-sup constant from~\eqref{eq:disc_infsupa} satisfies $\gamma_a=\sigma_{M+N-1}$, where~$\sigma_{M+N-1}$ is the smallest positive singular value of the matrix $\wt{\uuu M}$ from~\eqref{eq:M0SMatrix}.
\end{proposition}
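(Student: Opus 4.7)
The plan is to reproduce, mutatis mutandis, the SVD derivation used for $\gamma_B$ in the preceding section. First I would pass to coordinates: for $(\uu u,p),(\uu v,q)\in\bm V_h\times\wt Q_h$ with coefficient vectors $\uuu u,\uuu v\in\mathbb{R}^{M+N}$ relative to $\{\uu\chi_i\}$, one has $a_h(\uu u,p;\uu v,q)=\uuu v^\top\uuu M\uuu u$ and $\tnorm{(\uu v,q)}^2_{\DG}=\uuu v^\top\uuu D\uuu v$. The decomposition~\eqref{eq:quot} lets us identify $\bm V_h\times Q_h$ with the coset space $(\bm V_h\times\wt Q_h)/(\{\bm 0\}\times\mathcal{Q}_0)$, so that the inf-sup in~\eqref{eq:disc_infsupa} coincides with the ratio over $\mathbb{R}^{M+N}/\ker(\uuu M)$ and $\mathbb{R}^{M+N}/\ker(\uuu M^\top)$, provided the kernel of $\uuu M$ is exactly $\{\uuu 0\}\times\mathcal{Q}_0$ (and likewise for $\uuu M^\top$).

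Second, I would identify these kernels. Since $C_h\equiv 0$ when $\nu=\nicefrac12$, any $(\uu u^*,p^*)\in\ker(\uuu M)$ satisfies $A_h(\uu u^*,\uu v)+B_h(\uu v,p^*)=0$ for all $\uu v\in\bm V_h$ and $B_h(\uu u^*,q)=0$ for all $q\in\wt Q_h$. Testing the first identity with $\uu v=\uu u^*$ and invoking $B_h(\uu u^*,p^*)=0$ from the second leaves $A_h(\uu u^*,\uu u^*)=0$; the coercivity of $A_h$ recalled in Section~\ref{sec:stability} then yields $\uu u^*=\bm 0$. Consequently $B_h(\uu v,p^*)=0$ for every $\uu v\in\bm V_h$, and the converse half of Lemma~\ref{lem:kernelB} forces $p^*\in\mathcal{Q}_0$. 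The reverse inclusion $\{\bm 0\}\times\mathcal{Q}_0\subset\ker(\uuu M)$ is immediate from~\eqref{eq:Q0} together with $C_h\equiv 0$. The analogous reasoning applied to $a_h(\uu u,p;\uu v^*,q^*)=0$ gives $\dim\ker(\uuu M^\top)=1$ as well.

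Third, I would introduce $\wh{\uuu u}:=\uuu D^{\nicefrac12}\uuu u$ and $\wh{\uuu v}:=\uuu D^{\nicefrac12}\uuu v$ in order to rewrite the ratio as $\wh{\uuu v}^\top\wt{\uuu M}\wh{\uuu u}/\big(\wh{\uuu u}^\top\wh{\uuu u}\big)^{\nicefrac12}\big(\wh{\uuu v}^\top\wh{\uuu v}\big)^{\nicefrac12}$, with $\wt{\uuu M}$ as in~\eqref{eq:M0SMatrix}. Since $\uuu D^{\nicefrac12}$ is invertible, $\wt{\uuu M}$ inherits one-dimensional left and right kernels, and therefore has exactly one vanishing singular value $\sigma_{M+N}=0$ in its SVD $\wt{\uuu M}=\wt{\uuu X}\uuu\Sigma\wt{\uuu Y}^\top$. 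Expanding $\wh{\uuu u},\wh{\uuu v}$ in the orthonormal columns of $\wt{\uuu Y},\wt{\uuu X}$ complementary to the two kernels reduces the ratio to $\sum_{i=1}^{M+N-1}\sigma_i\alpha_i\beta_i/(\|\uu\alpha\|\,\|\uu\beta\|)$; the Cauchy--Schwarz argument used previously for $\gamma_B$ then yields $\gamma_a=\sigma_{M+N-1}$.

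The main obstacle is the second step. Because $a_h$ is generally non-symmetric (the $B_h$-blocks enter with opposite signs for any $\theta\in[-1,1]$), the kernels of $\uuu M$ and $\uuu M^\top$ must be characterised separately, and for this the coercivity of $A_h$ (under a sufficiently large penalty parameter $\gamma$) together with the converse direction of Lemma~\ref{lem:kernelB} are indispensable. Without these ingredients additional kernel directions could appear and would destroy the identification $\gamma_a=\sigma_{M+N-1}$.
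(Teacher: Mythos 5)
Your proposal is correct and follows essentially the same route as the paper, which only sketches the argument by remarking that Lemma~\ref{lem:kernelB} together with the coercivity of~$A_h$ yields a one-dimensional kernel of~$\uuu M$ and then invokes the SVD of~$\wt{\uuu M}$ exactly as in the computation of~$\gamma_B$. Your second step merely spells out the kernel identification (for both $\uuu M$ and $\uuu M^\top$) that the paper leaves implicit, and it is sound.
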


%%%%%%%%%%%%%%%%%%%%%%%%%%%%%%%%%%%%%%%%%%%%%%%%%%%%%%%%%%%%%%%%%%%%

\subsection{Numerical computation of the inf-sup constants}

We shall now investigate the inf-sup constants~$\gamma_B$ and~$\gamma_a$ from~\eqref{eq:disc_infsup} and~\eqref{eq:disc_infsupa}, respectively, by means of a number of numerical experiments. In particular, we will investigate the dependence on the approximation degree $k$ and on the Poisson ratio~$\nu$.  
%{Informationen, die nur die Konvergenzberechnungen betreffen kommen spaeter. (Alternativ alles zusammen am Anfang unter 4.)} 
In the sequel, we choose $\theta$ from~\eqref{eq:DGforms} to be 1 (i.e. we use the symmetric interior
penalty DG method), the mesh grading factor from
Section~\ref{sc:meshes} as $\sigma = \nicefrac12$, and the
penalty parameter from~\eqref{eq:c} is set to $\gamma = 10$. As shape functions we use tensorized Lagrange polynomials in the Gauss quadrature points. 
%Moreover, close to the singular set of the constructed solution more accurate quadrature rules are employed. 
All our computations are performed with the finite element library \textsf{deal.II}; see, e.g.,~\cite{BangerthHartmannKanschat2007,DealIIReference}.

\subsubsection{Inf-sup constant~$\gamma_B$}\label{sec:CompInfSupA}
We consider the canonical edge, corner, and corner-edge patch meshes presented in Section~\ref{sc:meshes}, see Figure~\ref{fig:Meshes}, with the modification that, in case of the corner-edge mesh, we refine one corner and \emph{all} adjacent neighbouring edges. Furthermore, we study the situation of geometrically refined meshes on a Fichera domain given by $(-1,1)^3\setminus[0,1)^3$, where we simultaneously refine the reentrant corner and all three adjacent edges.

Let us first fix the approximation degree $k$, and refine the meshes by increasing the number of refinement levels~$\ell$ step by step. For different approximation degrees the results are depicted in Figure~\ref{fig:InfSupB}. We clearly observe that the values of~$\gamma_B$ stabilize after some initial refinement steps, thereby underlining the robustness of the inf-sup constant of~$B_h$ with respect to the anisotropic geometric refinements. The asymptotic values are visualized in Figure~\ref{fig:InfSupBEndpoints}; it is observed that there is a mild $k$-dependence of the inf-sup constant~$\gamma_B$, however, our results indicate that, for the given examples, the dependence is considerably more optimistic than the theoretical bound $k^{-\nicefrac32}$ proved in~\cite{Schoetzau:2004:MHP}, cf.~Remark~\ref{rm:rho}.
%; indeed, note the small scaling in the semilogarithmic plot. 

\begin{figure}
\begin{center}
\subfigure{\includegraphics[width=0.49\textwidth]{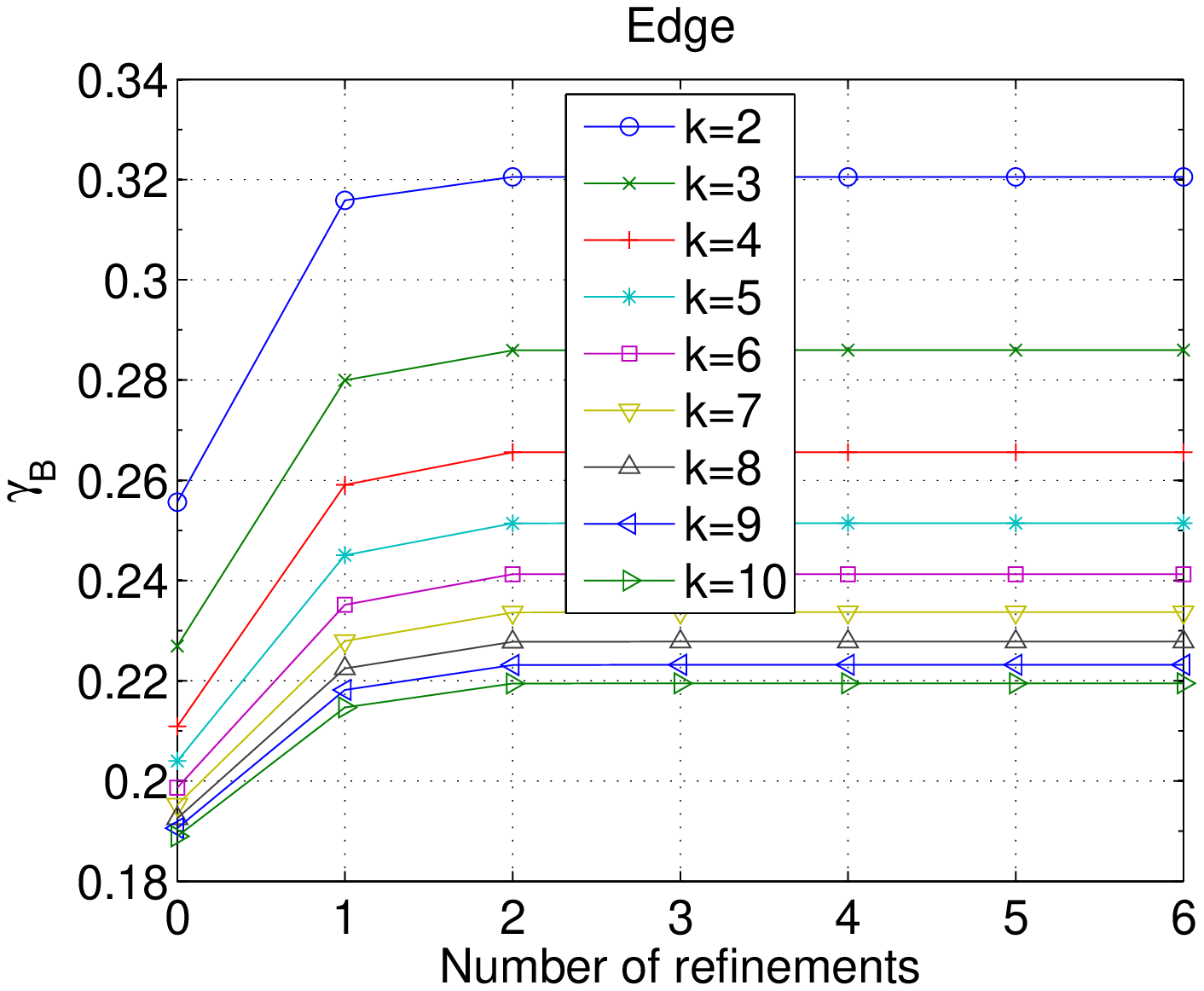}}
\subfigure{\includegraphics[width=0.49\textwidth]{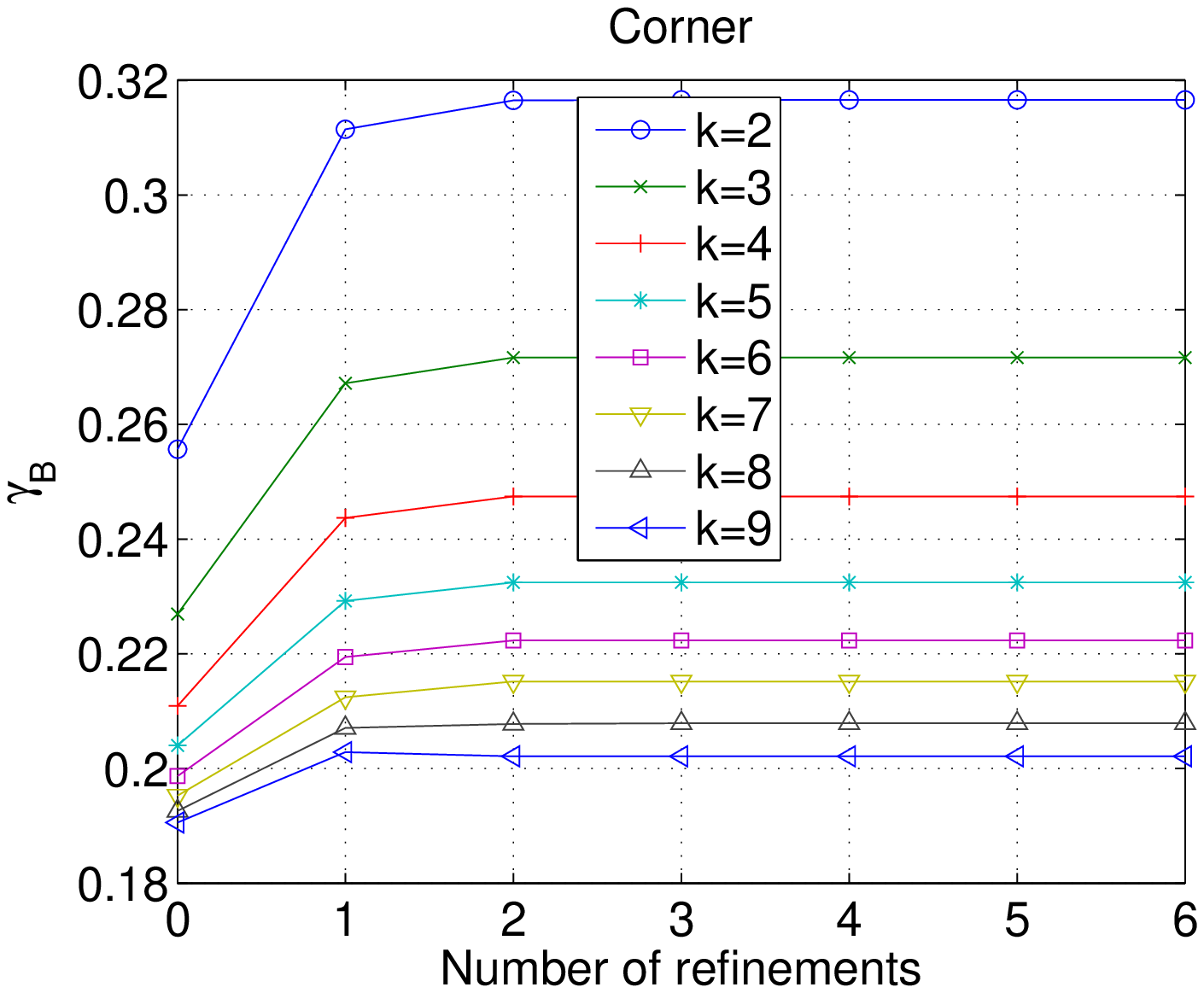}}
\subfigure{\includegraphics[width=0.49\textwidth]{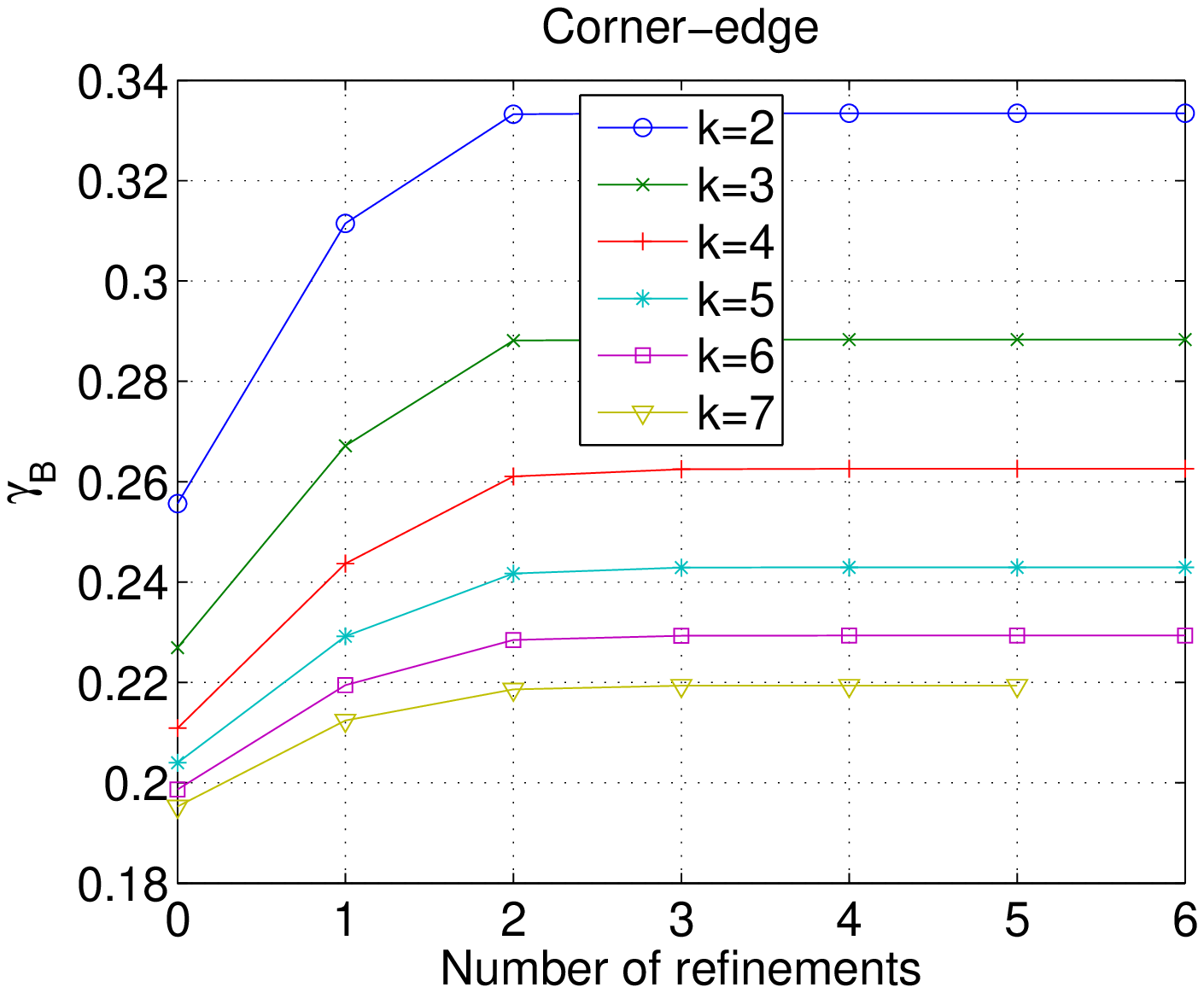}}
\subfigure{\includegraphics[width=0.49\textwidth]{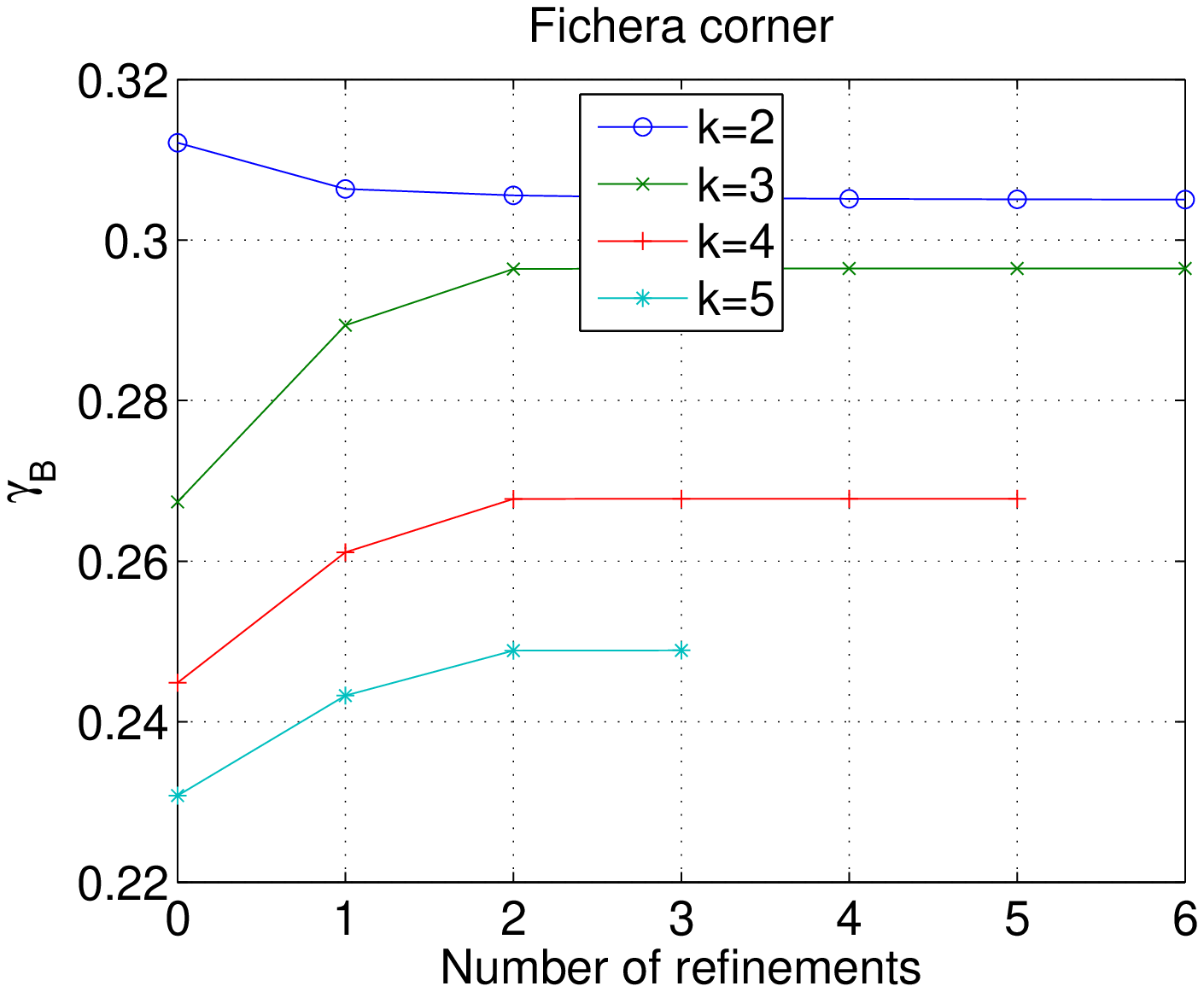}}
\end{center}
\caption{Inf-sup constant~$\gamma_B$ of the form $B_h$ in case of geometrically refined
edge, corner, corner-edge patches, as well as for a Fichera corner
refinement, for different approximation degrees~$k$.}
\label{fig:InfSupB}
\end{figure}

\begin{figure}
\begin{center}
\subfigure{\includegraphics[width=0.45\textwidth]{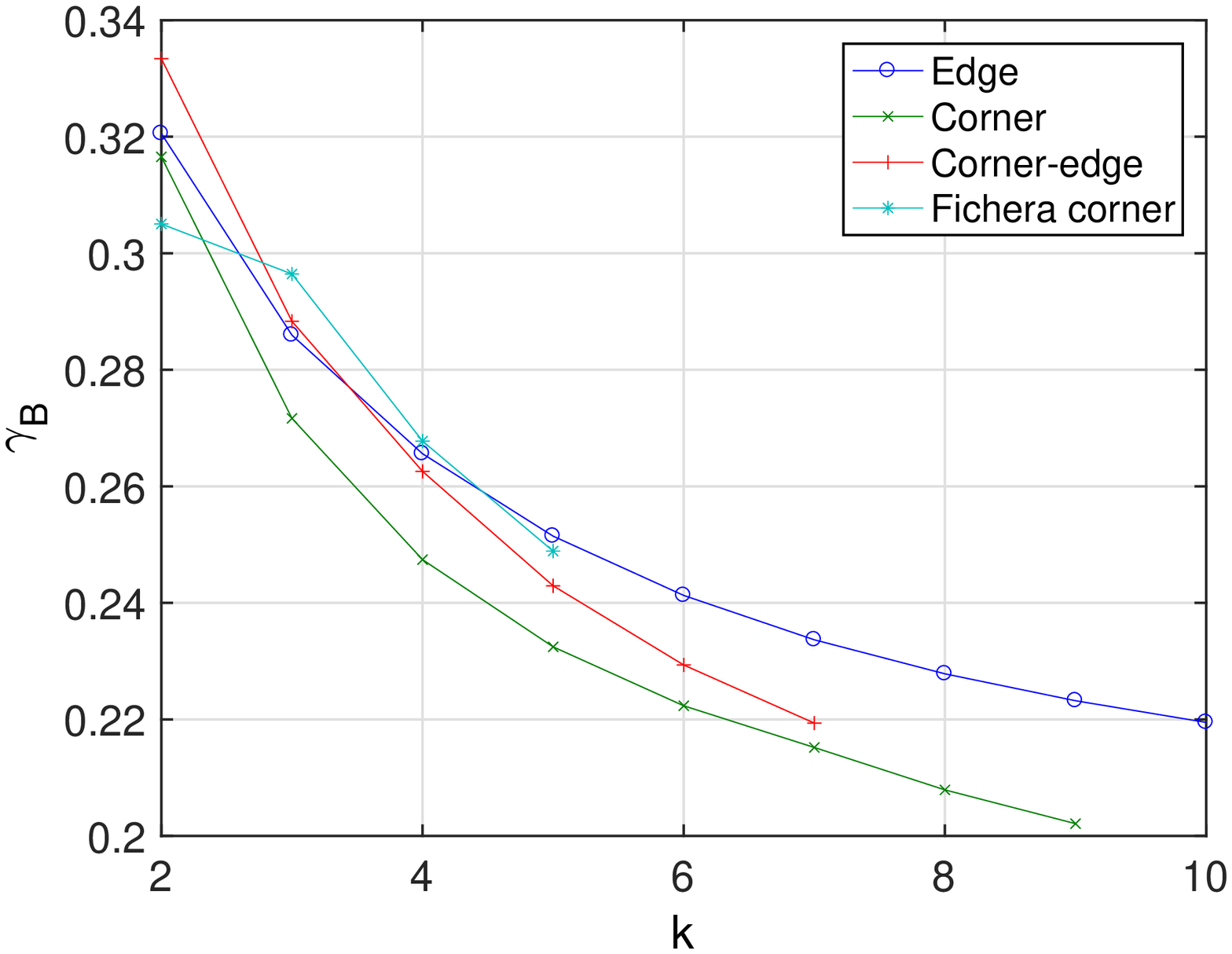}}\hspace{2ex}
\subfigure{
\includegraphics[width=0.45\textwidth]{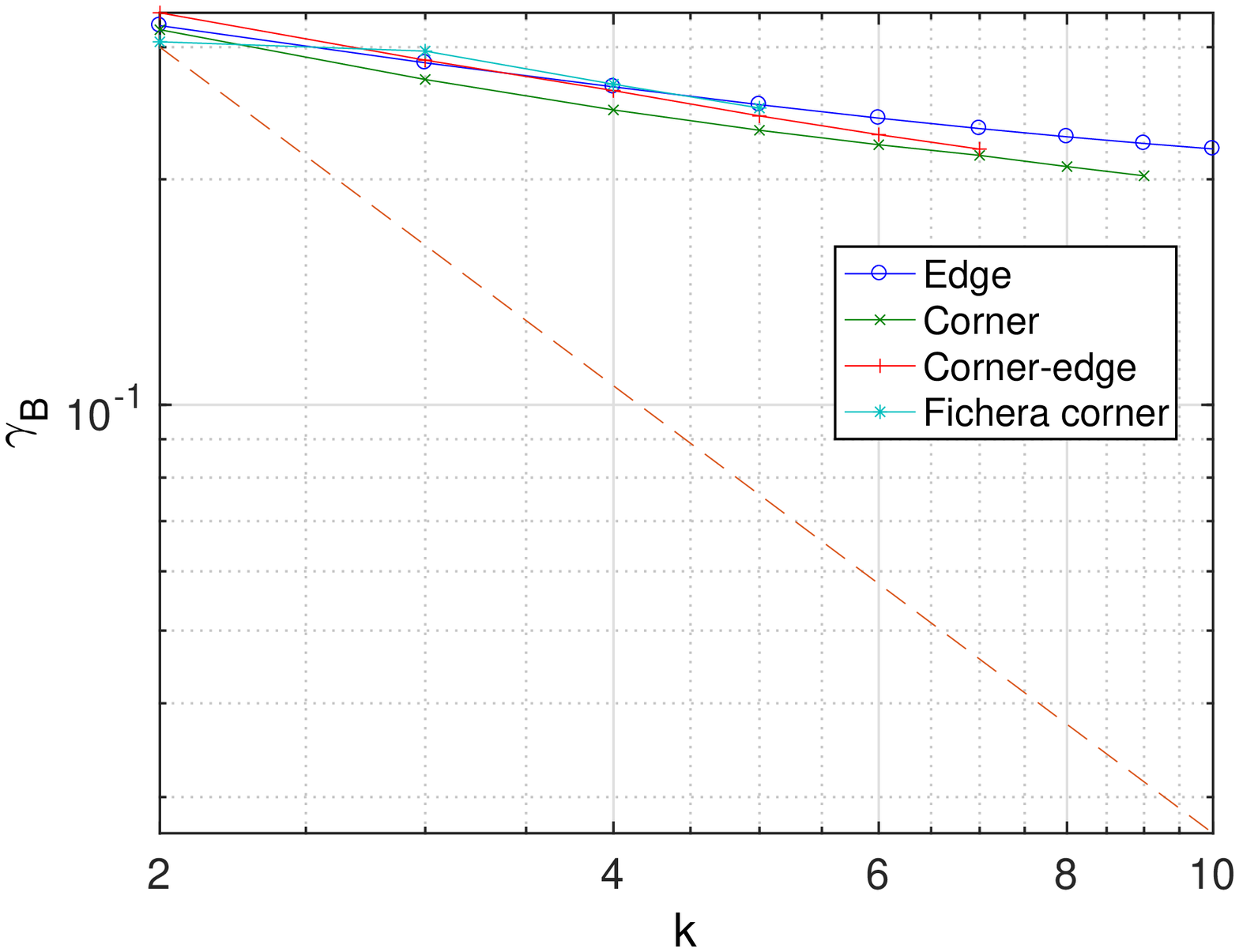}}
\end{center}
\caption{Stabilized values from Figure~\ref{fig:InfSupB} (on the right
with logarithmic scaling; the dashed line shows a slope of~$-\nicefrac32$).}
\label{fig:InfSupBEndpoints}
\end{figure}

\subsubsection{Inf-sup constant of the form ${a}_h$}
Let us turn to the behavior of the inf-sup constant $\gamma_a$
from~\eqref{eq:disc_infsupa} with respect to $k$, with $\nu=\nicefrac12$. From Theorem~\ref{Thm:disc_infsupa} recall the theoretical dependence $\gamma_a\gtrsim\max\{k^{-2\rho},1-2\nu\}$; this result holds true with a theoretical value of $\rho=\nicefrac{3}{2}$, cf.~Remark~\ref{rm:rho}. Since we set $\nu=\nicefrac12$, we deduce $\gamma_a\gtrsim k^{-3}$.

We focus on the canonical geometric edge, corner, and corner-edge refinements from Section~\ref{sc:meshes}. As before, we first fix the approximation degree~$k$,
and refine the meshes step by step in order to monitor the inf-sup constant; see
Figure~\ref{fig:InfSupA} for the resulting plots with different
approximation degrees. Again, we display the stabilized values of~$\gamma_a$ for increasing~$k$ in Figure~\ref{fig:InfSupAEndpoints}. The results are qualitatively similar to the inf-sup constant~$\gamma_B$ discussed earlier. There is a $k$-dependence of the inf-sup constant $\gamma_a$ which is again much
weaker than $k^{-3}$. Furthermore, our results show that the
inf-sup constant $\gamma_a$ does not deteriorate in the critical limit $\nu=\nicefrac12$ as shown in Theorem~\ref{Thm:disc_infsupa}.

\begin{figure}
\begin{center}
\subfigure{\includegraphics[width=0.49\textwidth]{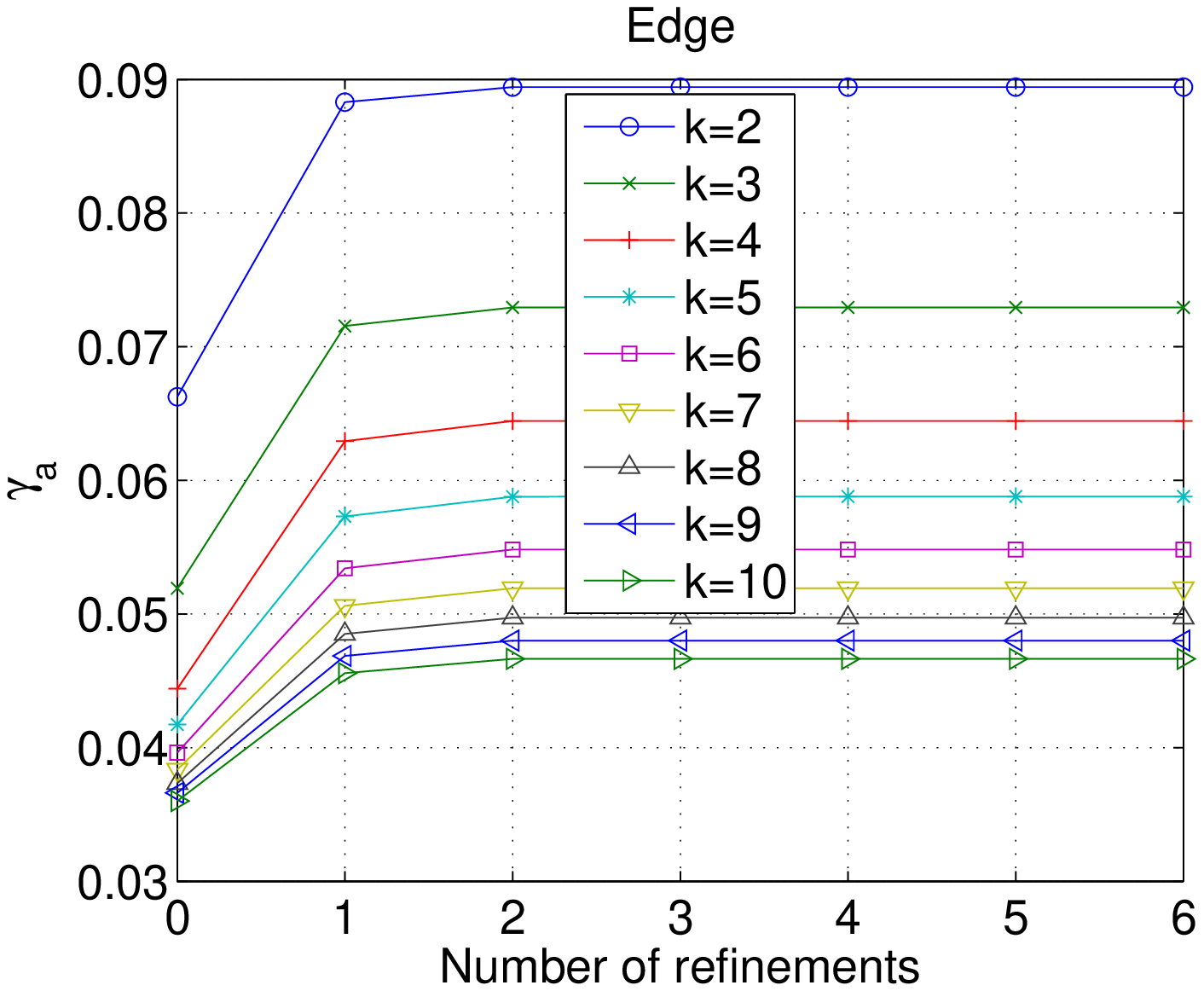}}
\subfigure{\includegraphics[width=0.49\textwidth]{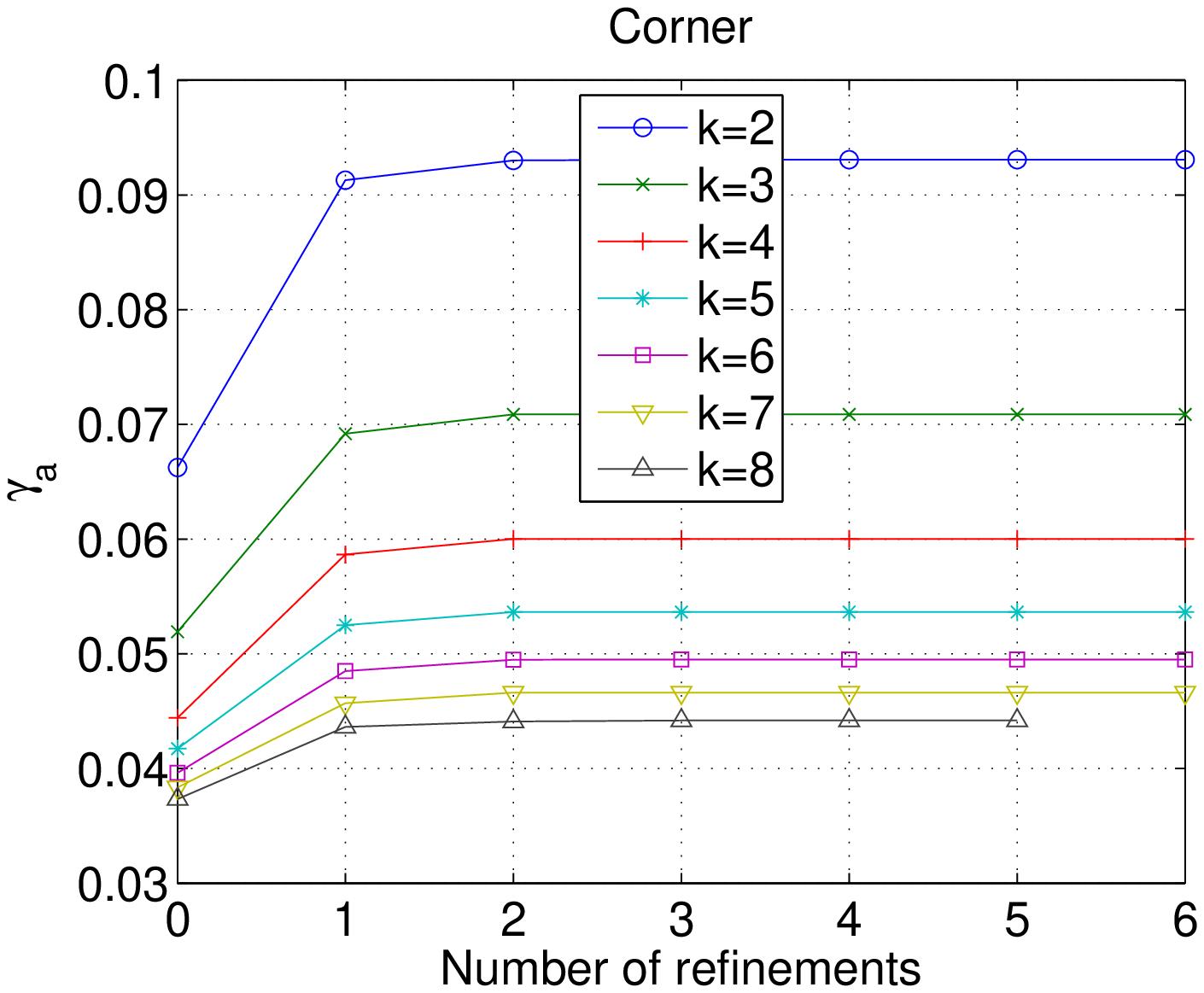}}
\subfigure{\includegraphics[width=0.49\textwidth]{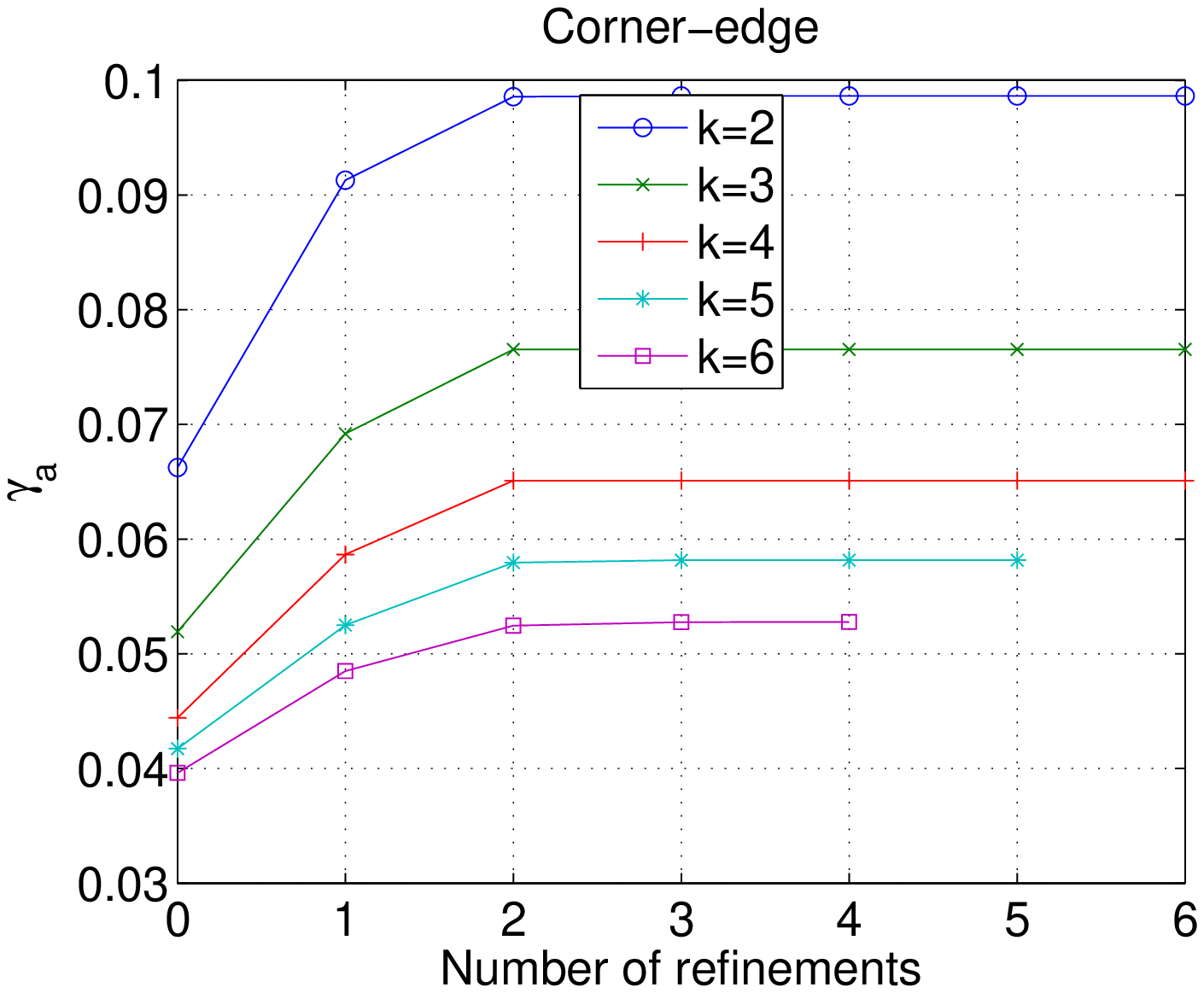}}
\end{center}
\caption{Inf-sup constant $\gamma_a$ of the form $a_h$  in case of geometric
edge, corner, and corner-edge refinements, with different approximation degrees $k$ and~$\nu=\nicefrac12$.}
\label{fig:InfSupA}
\end{figure}

\begin{figure}
\begin{center}
\subfigure{\includegraphics[width=0.49\textwidth]{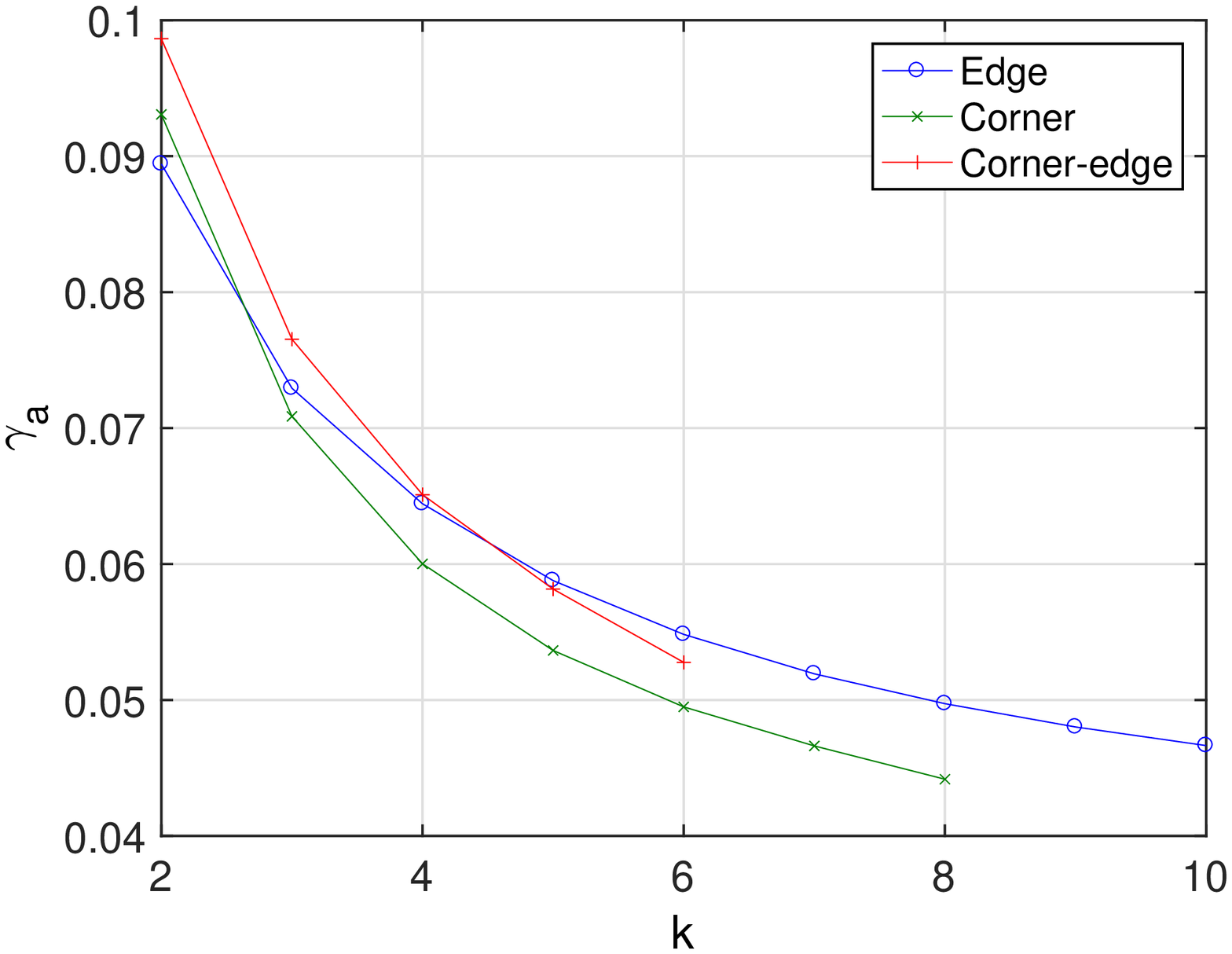}}\hfill
\subfigure{\includegraphics[width=0.49\textwidth]{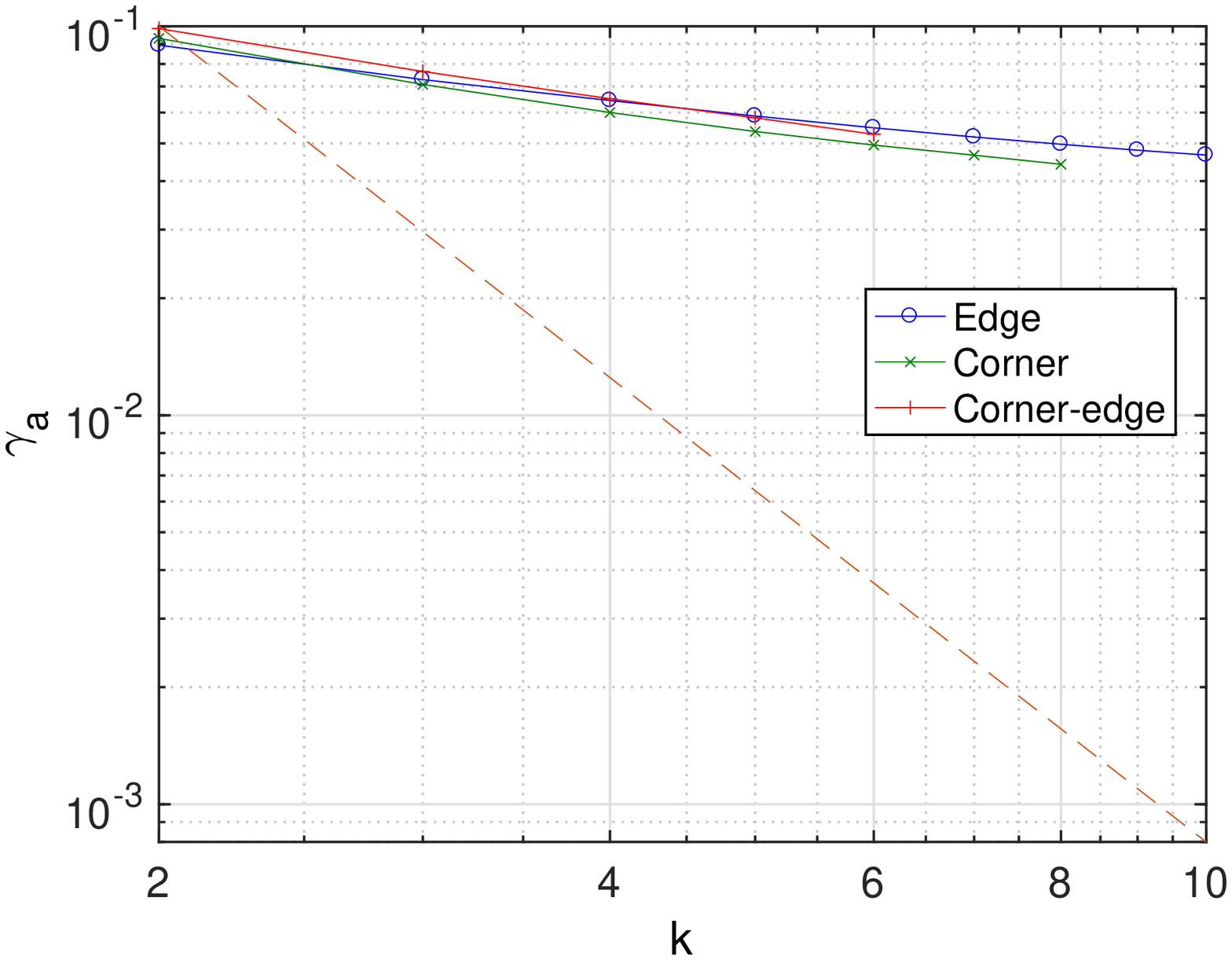}}
\end{center}
\caption{Stabilized values from Figure~\ref{fig:InfSupA} (on the right
with logarithmic scaling; the dashed line shows a slope of~$-3$).}
\label{fig:InfSupAEndpoints}
\end{figure}

\section{Exponential convergence on geometric meshes}\label{sec:CompResults}

In this section we turn to the exponential convergence of the spectral mixed DG method~\eqref{eq:mixedDG}. Inspired by the regularity theory from~\cite{DaugeCostabelNicaise} for analytic data (cf., in particular, Theorem~6.9), we
suppose that the solution~$(\bm u, p)$ of~\eqref{eq:problem1}--\eqref{eq:problem3}
belongs to~$A_{-1-\uu\beta}(\Omega)^{3}\times A_{-\uu\beta}(\Omega)$, where, for a weight vector~$\bm\gamma\in\IR^{\abs{\mathcal C}+\abs{\mathcal E}}$, we consider the countably normed space of piecewise analytic functions
\[
A_{\uu\gamma}(\Omega):=\bigg\{ v\in\bigcap_{m\ge 1}M^m_{\uu\gamma}(\Omega)
:\N{v}_{M^m_{\uu\gamma}(\Omega)}\le C^{m+1}_vm!\ \forall\, m\in\IN\bigg\},
\]
with a constant $C_v>0$ depending on the function $v$. Under this assumption, referring to~\cite[Theorem~6.2]{wihlerWirz}, it can be shown that the DG approximation~$(\bm u_h,p_h)$ from~\eqref{eq:mixedDG} converges at an exponential rate. To quantify this fact, let~$\nu\in(0,\nicefrac12]$, and consider a sequence of geometric edge meshes~$\T$ as in Section~\ref{sc:meshes}. Moreover, choose a uniform polynomial degree $k\ge2$ that is proportional to the number of layers $\ell\ge1$ in~$\T$. Then, the DG approximation $(\uu u_h,p_h)$ from~\eqref{eq:mixedDG} satisfies the error bound
\begin{equation}\label{eq:ExponentialConvergence}
\tnorm{(\uu u -\uu u_h,p-p_h)}_{DG}^2\lesssim\exp(-b\sqrt[5]{N}),
\end{equation}
where $N:=\dim(\uu V_h\times Q_h)$ denotes the number of degrees of
freedom.

\subsection{Exponential convergence on canonical patches}
In our numerical examples, we use manufactured solutions, which feature typical singularities close to~$\S$ in the displacement~$\bm u=(u_1,u_2,u_3)$, and then test the spectral DGFEM~\eqref{eq:mixedDG} with the resulting right-hand side force functions~$\bm f$ in~\eqref{eq:problem1}. The domain $\Omega$ is chosen to be the unit cube. To cover all possible cases, we consider one solution with an edge, one with a corner, and another one with a corner-edge singularity:
\begin{itemize}
\item Displacement with an anisotropic edge singularity along the $z$-axis:
\[
u_1=u_2=0,\qquad u_3=(x^2+y^2)^{\nicefrac{1}{4}}z(1-z).
\]
\item Displacement with an isotropic corner singularity at the origin:
\[
u_1=u_2=0,\qquad
u_3=(x^2+y^2+z^2)^{\nicefrac{1}{6}}z(1-z).
\]
\item Displacement which combines the above edge and corner
singularities:
\[
u_1=u_2=0,\qquad
u_3=(x^2+y^2+z^2)^{\nicefrac{1}{6}}(x^2+y^2)^{\nicefrac{1}{4}}z(1-z).
\]
\end{itemize}
The corresponding pressures $p$ for these displacements are then given
via~\eqref{eq:problem2}:
\[
p=-\frac{1}{1-2\nu}\nabla\cdot\uu u, \qquad \nu\neq\nicefrac{1}{2}.
\]
Incidentally, in order to avoid too complicated right-hand sides $\uu f$, we do not
enforce the displacements $\uu u$ to vanish on the whole boundary $\partial\Omega$. More precisely, they are chosen such that $\uu u\cdot\uu
n\big\vert_{\partial\Omega}=0$, i.e. the normal component of the displacement field
is zero. Then, by the Gauss-Green theorem, notice the mean value property of the pressure,
\[
\int_\Omega p\, \dx=-\frac{1}{1-2\nu}\int_\Omega\nabla\!\cdot\!\uu u\,
\dx=-\frac{1}{1-2\nu}\int_{\partial\Omega}\uu u\!\cdot\!\uu n\, \ds=0.
\]
The nonzero
Dirichlet boundary conditions are accounted for by means of a standard flux
term which is added to the right-hand side of the DGFEM formulation~
\eqref{eq:disccompact}.

For our test examples, we use $\nu\in\left\{\nicefrac18,\nicefrac12,\nicefrac38\right\}$. In order to solve the resulting linear systems we employ the \textsf{GMRES} method in combination with the \textsf{SparseILU} preconditioner implemented in \textsf{deal.II}. The iterations terminate as soon as the Euclidean norm of the (unpreconditioned) residual becomes smaller or equal to $10^{-12}$. The initial meshes consist of a single element, and an approximation degree $k=1$. In the following, we refine successively the meshes towards the singularities, and simultaneously increase the approximation degree by one in each refinement step such that
$k\sim \ell$, where $\ell$ is the number of layers. Since the singularities (and thereby their location) in the examples above are known explicitly, we only refine the corresponding edge and/or corner; see Figure~\ref{fig:Refinement} for the corner-edge example.

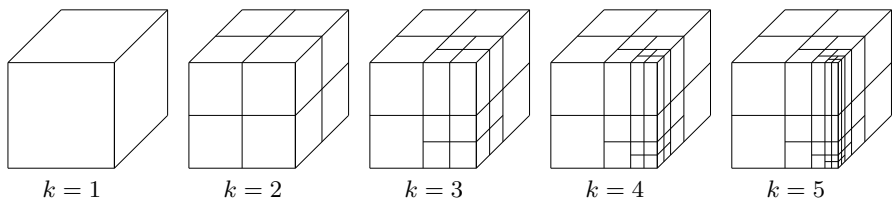
\begin{figure}
\begin{center}
\begin{pspicture}(11.62,2.8)
\psset{linewidth=.01, yunit=1.4, xunit=1.4}
\rput(0,.5){
\psline(0, 0) (1, 0) \psline(0, 0) (0, 1)
\psline(0, 1) (1, 1) \psline(1, 0) (1, 1)
\psline(1, 0) (1.5, .5)
\psline(1, 1) (1.5, 1.5)
\psline(1, 0) (1, 1)
\psline(0, 1) (.5, 1.5)
\psline(.5, 1.5) (1.5, 1.5)
\psline(1.5, .5) (1.5, 1.5)
}
\rput(0.6,.3){\small{$k=1$}}

\rput(1.7,.5){
\psline(0, 0) (1, 0) \psline(0, 0) (0, 1)
\psline(0, 1) (1, 1) \psline(1, 0) (1, 1)
\psline(1, 0) (1.5, .5)
\psline(1, 1) (1.5, 1.5)
\psline(1, 0) (1, 1)
\psline(0, 1) (.5, 1.5)
\psline(.5, 1.5) (1.5, 1.5)
\psline(1.5, .5) (1.5, 1.5)
\psline(.5, 0) (.5, 1)
\psline(.25, 1.25) (1.25, 1.25)
\psline(.5, 1) (1, 1.5)
\psline(1.25, .25) (1.25, 1.25)
\psline(0, .5) (1, .5)
\psline(1, .5) (1.5, 1)
}
\rput(2.3,0.3){\small{$k=2$}}

\rput(3.4,.5){
\psline(0, 0) (1, 0) \psline(0, 0) (0, 1)
\psline(0, 1) (1, 1) \psline(1, 0) (1, 1)
\psline(1, 0) (1.5, .5)
\psline(1, 1) (1.5, 1.5)
\psline(1, 0) (1, 1)
\psline(0, 1) (.5, 1.5)
\psline(.5, 1.5) (1.5, 1.5)
\psline(1.5, .5) (1.5, 1.5)
\psline(.5, 0) (.5, 1)
\psline(.25, 1.25) (1.25, 1.25)
\psline(.5, 1) (1, 1.5)
\psline(1.25, .25) (1.25, 1.25)
\psline(0, .5) (1, .5)
\psline(1, .5) (1.5, 1)
\psline(.75, 0) (.75, 1)
\psline(1.125, 0.125) (1.125, 1.125)
\psline(.625, 1.125) (1.125, 1.125)
\psline(.75, 1) (1, 1.25)
\psline(.5, .25) (1, .25)
\psline(1, .25) (1.25, .5)
}
\rput(4,0.3){\small{$k=3$}}

\rput(5.1,.5){
\psline(0, 0) (1, 0) \psline(0, 0) (0, 1)
\psline(0, 1) (1, 1) \psline(1, 0) (1, 1)
\psline(1, 0) (1.5, .5)
\psline(1, 1) (1.5, 1.5)
\psline(1, 0) (1, 1)
\psline(0, 1) (.5, 1.5)
\psline(.5, 1.5) (1.5, 1.5)
\psline(1.5, .5) (1.5, 1.5)
\psline(.5, 0) (.5, 1)
\psline(.25, 1.25) (1.25, 1.25)
\psline(.5, 1) (1, 1.5)
\psline(1.25, .25) (1.25, 1.25)
\psline(0, .5) (1, .5)
\psline(1, .5) (1.5, 1)
\psline(.75, 0) (.75, 1)
\psline(1.125, 0.125) (1.125, 1.125)
\psline(.625, 1.125) (1.125, 1.125)
\psline(.75, 1) (1, 1.25)
\psline(.5, .25) (1, .25)
\psline(1, .25) (1.25, .5)
\psline(.875, 1) (1, 1.125)
\psline(.8125, 1.0625) (1.0625, 1.0625)
\psline(.875, 0) (.875, 1)
\psline(1.0625, .0625) (1.0625, 1.0625)
\psline(.75, .125) (1, .125)
\psline(1, .125) (1.125, .25)
}
\rput(5.7,0.3){\small{$k=4$}}

\rput(6.8,.5){
\psline(0, 0) (1, 0) \psline(0, 0) (0, 1)
\psline(0, 1) (1, 1) \psline(1, 0) (1, 1)
\psline(1, 0) (1.5, .5)
\psline(1, 1) (1.5, 1.5)
\psline(1, 0) (1, 1)
\psline(0, 1) (.5, 1.5)
\psline(.5, 1.5) (1.5, 1.5)
\psline(1.5, .5) (1.5, 1.5)
\psline(.5, 0) (.5, 1)
\psline(.25, 1.25) (1.25, 1.25)
\psline(.5, 1) (1, 1.5)
\psline(1.25, .25) (1.25, 1.25)
\psline(0, .5) (1, .5)
\psline(1, .5) (1.5, 1)
\psline(.75, 0) (.75, 1)
\psline(1.125, 0.125) (1.125, 1.125)
\psline(.625, 1.125) (1.125, 1.125)
\psline(.75, 1) (1, 1.25)
\psline(.5, .25) (1, .25)
\psline(1, .25) (1.25, .5)
\psline(.875, 1) (1, 1.125)
\psline(.8125, 1.0625) (1.0625, 1.0625)
\psline(.875, 0) (.875, 1)
\psline(1.0625, .0625) (1.0625, 1.0625)
\psline(.75, .125) (1, .125)
\psline(1, .125) (1.125, .25)
\psline(.9375, 0) (.9375, 1)
\psline(1.03125, 0.03125) (1.03125, 1.03125)
\psline(.90625, 1.03125) (1.03125, 1.03125)
\psline(.9375, 1) (1, 1.0625)
\psline(.875, .0625) (1, .0625)
\psline(1, .0625) (1.0625, .125)
}
\rput(7.4,.3){\small{$k=5$}}
\end{pspicture}
\end{center}
\vspace*{-15pt}
\caption{The mesh and the approximation degree during the first 4 refinement
steps for the approximation of the solution with a corner-edge singularity.}
\label{fig:Refinement}
\end{figure}

In Figures~\ref{fig:Edge} and~\ref{fig:CornerEdge} we display the error of the
approximation in the DG-norm~\eqref{eq:DGNorm} in a semi-logarithmic coordinate system with respect to the 5th, respectively the 4th root of the number of degrees of freedom $N$; cf.~\eqref{eq:ExponentialConvergence}. Indeed, on a single element, the number of degrees of freedom grows with~$\mathcal{O}(k^3)$; in addition, when resolving a corner-edge singularity, the number of elements grows with~$\mathcal{O}(\ell^2)$, while, for an edge or a corner singularity, only $\mathcal{O}(\ell)$ many elements are needed. Hence, recalling that~$k\sim\ell$, in the cases of the edge and the corner singularity examples a growth of~$\mathcal{O}(N^4)$ degrees of freedom is obtained, while in the case of the corner-edge example we even have~$\mathcal{O}(N^5)$ (thus the 5th root in~\eqref{eq:ExponentialConvergence}). The graphs show that, after some initial refinement steps, we obtain nearly constant slopes in all three situations. Hence, these experiments confirm that the proposed spectral DGFEM~\eqref{eq:mixedDG} on geometric edge meshes is able to resolve isotropic as well as anisotropic singularities at exponential rates. 
%In case of the solution with a corner singularity, the DG method
%converges faster since the singularity is weaker. Indeed, in this example the
%first derivatives do not diverge as $(x,y,z)\to 0$.

\begin{figure}
\begin{center}
\includegraphics[width=0.49\textwidth]{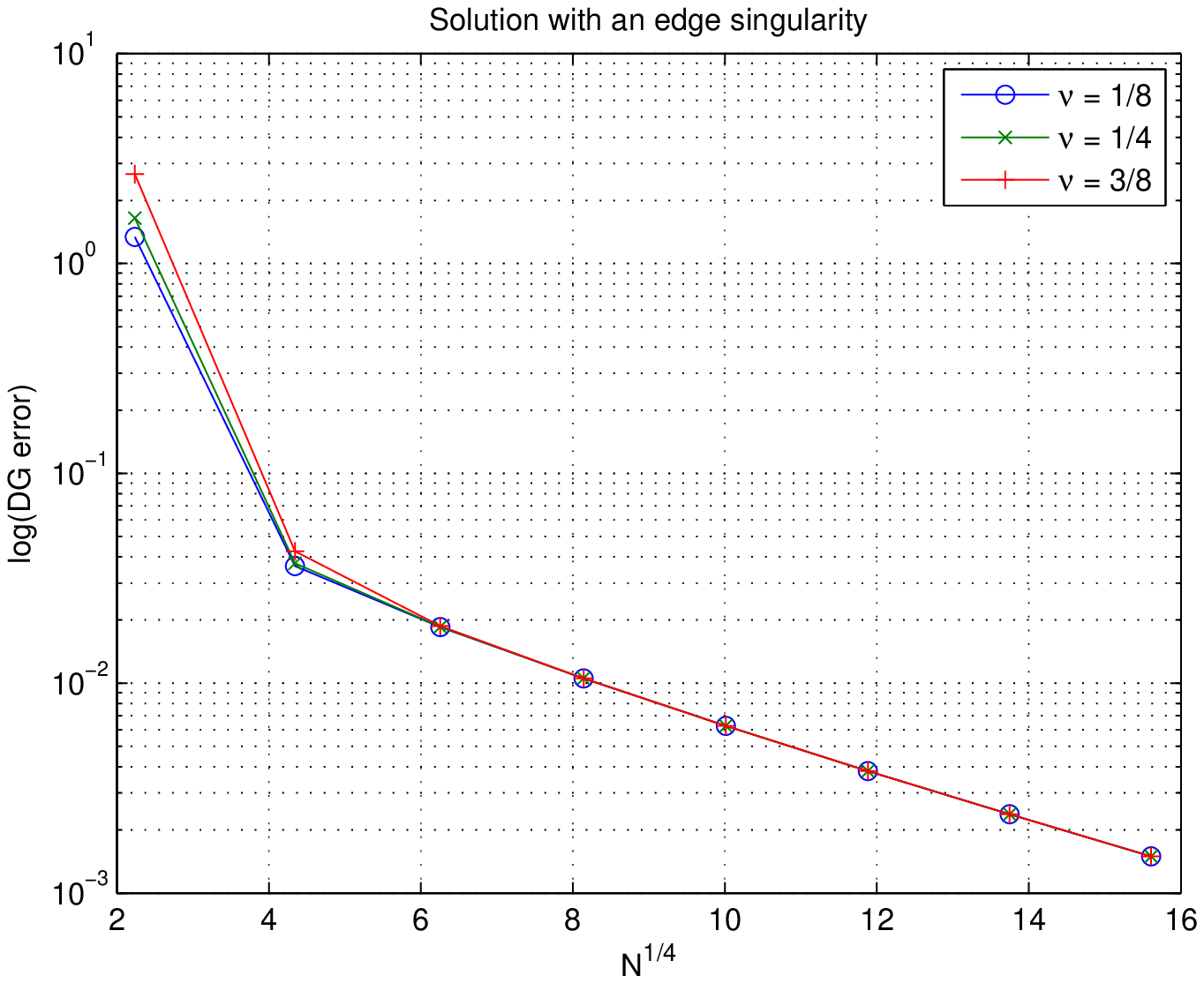}
\includegraphics[width=0.49\textwidth]{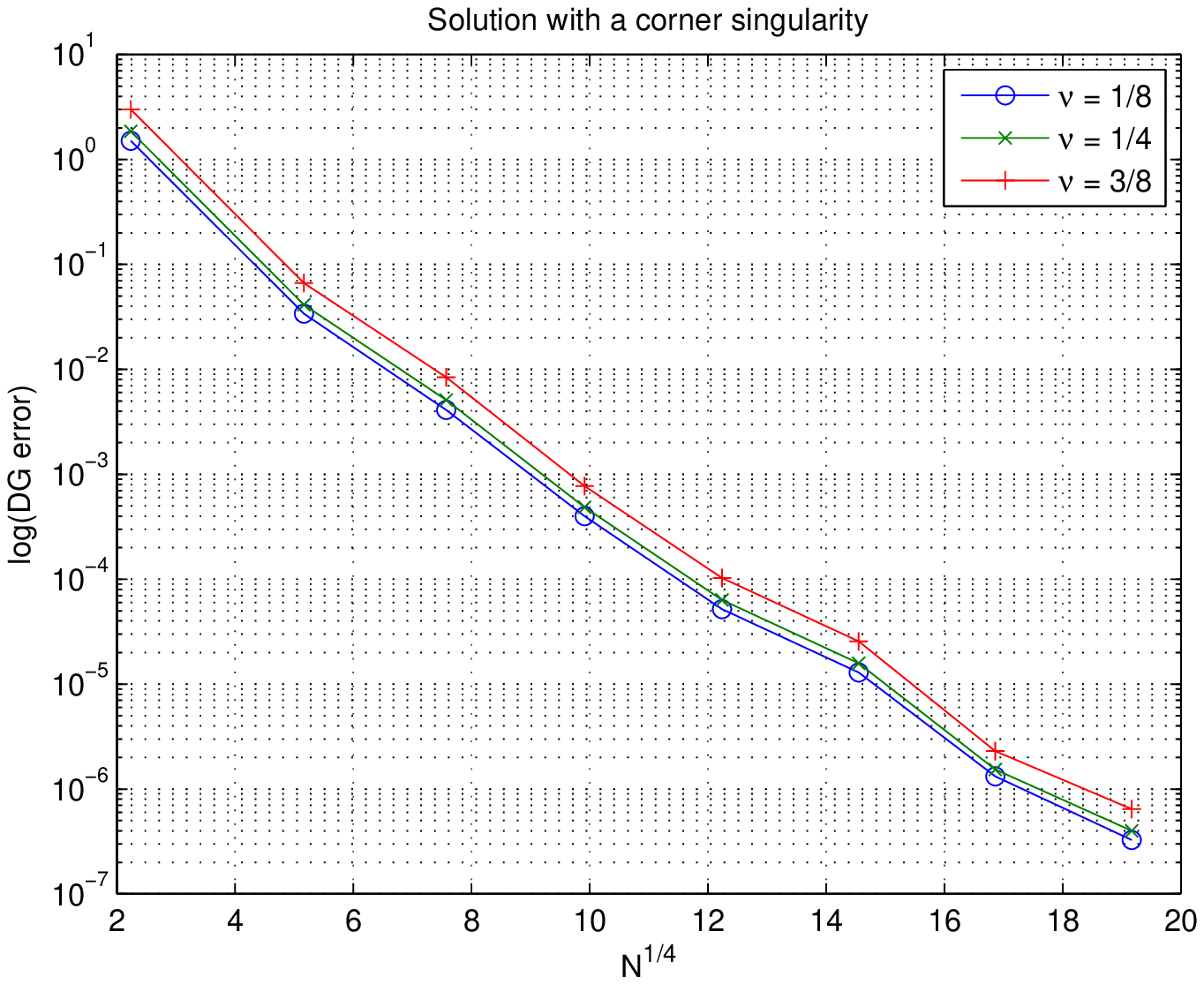}
\end{center}
\caption{Performance of the DGFEM for the solutions with an edge
singularity (left) and a corner singularity (right).}
\label{fig:Edge}
\end{figure}

\begin{figure}
\begin{center}
\includegraphics[width=0.49\textwidth]{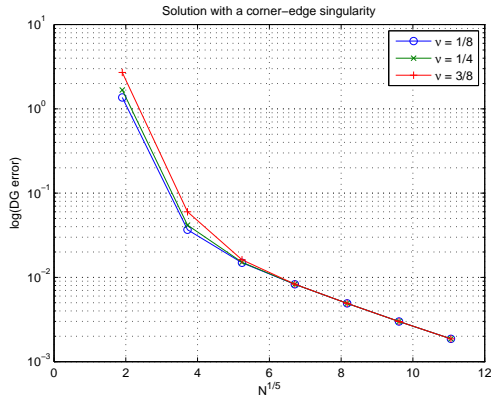}
\end{center}
\vspace*{-15pt}
\caption{Performance of the DGFEM for the solution with a
corner-edge singularity.}
\label{fig:CornerEdge}
\end{figure}

\subsection{Robustness with respect to the Poisson ratio}
The purpose of the second series of experiments is to investigate the robustness of the exponential convergence bound~\eqref{eq:ExponentialConvergence} with respect to~$\nu$ as~$\nu\to\nicefrac12$. The domain $\Omega$ is again chosen to be the unit cube.

\begin{example}\label{ex:1}
We first consider an example where the displacement~$\bm u$ is smooth and divergence-free:
\[
\uu u = 
\sin(\pi x)\sin(\pi y)\sin(\pi z)\cdot
\begin{pmatrix}
\sin(\pi x)\cos(\pi
y)\cos(\pi z) \\
\sin(\pi y)\cos(\pi
x)\cos(\pi z) \\
-2\sin(\pi z)\cos(\pi
x)\cos(\pi y)
\end{pmatrix}.
\]
In this case it immediately follows that~$p=0$, and, hence,  $-\Delta\uu u=\uu f$; in particular, the resulting right-hand side force function~$\bm f$ is independent of~$\nu$. 

For this example, we use a fixed uniform mesh consisting of 64 elements, and simply vary the uniform polynomial degree~$k$. For this setup, since the solution~$(\bm u,p)$ is analytic, we expect exponential convergence with respect to the 3rd root of the number of degrees of freedom. In order to study the convergence with respect~$\nu$ as $\nu\to\nicefrac{1}{2}$, in Figure~\ref{fig:StokesConvergence}, we plot the error of the DG method with respect to different values of the Poisson ratio~$\nu$. Clearly, the deviations between the different exponential convergence curves are almost negligible, and, thereby, underline the robustness of the DGFEM with respect to $\nu$ for this example. 

\begin{figure}
\begin{center}
\includegraphics[width=0.49\textwidth]{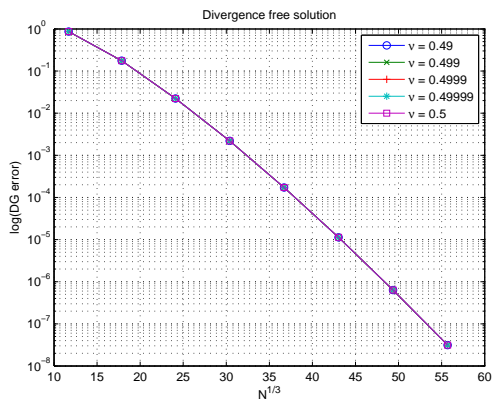}
\end{center}
\caption{Example~\ref{ex:1}: Performance of the spectral DGFEM for different values of~$\nu$.}
\label{fig:StokesConvergence}
\end{figure}
\end{example}

\begin{example}\label{ex:2}
In our last experiment, we choose a circular force in the $x$-$y$-plane and a
linear force in the $z$-direction, i.e.
\[
\uu f = \begin{pmatrix}
-y-\nicefrac12 \\ 
x-\nicefrac12\\
x-\nicefrac12
\end{pmatrix}.
\]
Since the exact solution is not known in this example, we compute a reference
solution based on refining all edges and corners of $\Omega$ with~$k=\ell=5$;
cf. Figure~\ref{fig:RefinementAll}. 
%{Since otherwise not supported, we use Lagrange polynomials with equidistant support points here.} 
The DG error for different values of~$\nu$ is depicted in Figure~\ref{fig:ReferenceSolution}; as in the previous example, we observe that the DGFEM remains stable when $\nu$ tends to the incompressible limit~$\nicefrac12$. Furthermore, the nearly straight graphs indicate that exponential convergence is also achieved in these computations.
\end{example}

\begin{figure}
\begin{center}
\begin{pspicture}(0,0.3)(11.62,2.9)
\psset{linewidth=.01, yunit=1.4, xunit=1.4}
\rput(0,.5){
\psline(0, 0) (1, 0) \psline(0, 0) (0, 1)
\psline(0, 1) (1, 1) \psline(1, 0) (1, 1)
\psline(1, 0) (1.5, .5)
\psline(1, 1) (1.5, 1.5)
\psline(1, 0) (1, 1)
\psline(0, 1) (.5, 1.5)
\psline(.5, 1.5) (1.5, 1.5)
\psline(1.5, .5) (1.5, 1.5)
}
\rput(0.6,.3){\small{$k=1$}}

\rput(1.7,.5){
\psline(0, 0) (1, 0) \psline(0, 0) (0, 1)
\psline(0, 1) (1, 1) \psline(1, 0) (1, 1)
\psline(1, 0) (1.5, .5)
\psline(1, 1) (1.5, 1.5)
\psline(1, 0) (1, 1)
\psline(0, 1) (.5, 1.5)
\psline(.5, 1.5) (1.5, 1.5)
\psline(1.5, .5) (1.5, 1.5)
\psline(.5, 0) (.5, 1)
\psline(.25, 1.25) (1.25, 1.25)
\psline(.5, 1) (1, 1.5)
\psline(1.25, .25) (1.25, 1.25)
\psline(0, .5) (1, .5)
\psline(1, .5) (1.5, 1)

}
\rput(2.3,0.3){\small{$k=2$}}

\rput(3.4,.5){
\psline(0, 0) (1, 
   0) \psline(0, .25) (1, .25) \psline(0, .5) (1, .5)
\psline(0, .75) (1, .75) \psline(0, 1) (1, 1)
\psline(0, 0) (0, 1) \psline(.25, 0) (.25, 1) \psline(.5, 
   0) (.5, 1)
\psline(.75, 0) (.75, 1) \psline(1, 0) (1, 1)
\psline(1, 
   0) (1.5, .5) \psline(1, .25) (1.5, .75) \psline(1, .5) (1.5,
    1)
\psline(1, .75) (1.5, 1.25) \psline(1, 1) (1.5, 1.5)
\psline(1.125, .125) (1.125, 1.125) \psline(1.25, .25) (1.25, 
   1.25)
\psline(1.375, .375) (1.375, 1.375) \psline(1.5, .5) (1.5, 1.5)
\psline(.125, 1.125) (1.125, 1.125) \psline(.25, 1.25) (1.25, 
   1.25)
\psline(.375, 1.375) (1.375, 1.375) \psline(.5, 1.5) (1.5, 1.5)
\psline(0, 1) (.5, 1.5) \psline(.25, 1) (.75, 1.5) \psline(.5, 
   1) (1, 1.5)
\psline(.75, 1) (1.25, 1.5)

}
\rput(4,0.3){\small{$k=3$}}

\rput(5.1,.5){
\psline(0, 0) (1, 
   0) \psline(0, .25) (1, .25) \psline(0, .5) (1, .5)
\psline(0, .75) (1, .75) \psline(0, 1) (1, 1)
\psline(0, 0) (0, 1) \psline(.25, 0) (.25, 1) \psline(.5, 
   0) (.5, 1)
\psline(.75, 0) (.75, 1) \psline(1, 0) (1, 1)
\psline(1, 
   0) (1.5, .5) \psline(1, .25) (1.5, .75) \psline(1, .5) (1.5,
    1)
\psline(1, .75) (1.5, 1.25) \psline(1, 1) (1.5, 1.5)
\psline(1.125, .125) (1.125, 1.125) \psline(1.25, .25) (1.25, 
   1.25)
\psline(1.375, .375) (1.375, 1.375) \psline(1.5, .5) (1.5, 1.5)
\psline(.125, 1.125) (1.125, 1.125) \psline(.25, 1.25) (1.25, 
   1.25)
\psline(.375, 1.375) (1.375, 1.375) \psline(.5, 1.5) (1.5, 1.5)
\psline(0, 1) (.5, 1.5) \psline(.25, 1) (.75, 1.5) \psline(.5, 
   1) (1, 1.5)
\psline(.75, 1) (1.25, 1.5)

\psline(0, .125) (1, .125)
\psline(0, .875) (1, .875)
\psline(.125, 0) (.125, 1)
\psline(.875, 0) (.875, 1)

\psline(1, .125) (1.5, .625)
\psline(1, .875) (1.5, 1.375)
\psline(1.0625, .0625) (1.0625, 1.0625)
\psline(1.4375, .4375) (1.4375, 1.4375)

\psline(.0625, 1.0625) (1.0625, 1.0625)
\psline(.4375, 1.4375) (1.4375, 1.4375)
\psline(.125, 1) (.625, 1.5)
\psline(.875, 1) (1.375, 1.5)

}
\rput(5.7,0.3){\small{$k=4$}}

\rput(6.8,.5){
\psline(0, 0) (1, 
   0) \psline(0, .25) (1, .25) \psline(0, .5) (1, .5)
\psline(0, .75) (1, .75) \psline(0, 1) (1, 1)
\psline(0, 0) (0, 1) \psline(.25, 0) (.25, 1) \psline(.5, 
   0) (.5, 1)
\psline(.75, 0) (.75, 1) \psline(1, 0) (1, 1)
\psline(1, 
   0) (1.5, .5) \psline(1, .25) (1.5, .75) \psline(1, .5) (1.5,
    1)
\psline(1, .75) (1.5, 1.25) \psline(1, 1) (1.5, 1.5)
\psline(1.125, .125) (1.125, 1.125) \psline(1.25, .25) (1.25, 
   1.25)
\psline(1.375, .375) (1.375, 1.375) \psline(1.5, .5) (1.5, 1.5)
\psline(.125, 1.125) (1.125, 1.125) \psline(.25, 1.25) (1.25, 
   1.25)
\psline(.375, 1.375) (1.375, 1.375) \psline(.5, 1.5) (1.5, 1.5)
\psline(0, 1) (.5, 1.5) \psline(.25, 1) (.75, 1.5) \psline(.5, 
   1) (1, 1.5)
\psline(.75, 1) (1.25, 1.5)

\psline(0, .125) (1, .125)
\psline(0, .875) (1, .875)
\psline(.125, 0) (.125, 1)
\psline(.875, 0) (.875, 1)

\psline(0, .0625) (1, .0625)
\psline(0, .9375) (1, .9375)
\psline(.0625, 0) (.0625, 1)
\psline(.9375, 0) (.9375, 1)

\psline(1, .125) (1.5, .625)
\psline(1, .875) (1.5, 1.375)
\psline(1.0625, .0625) (1.0625, 1.0625)
\psline(1.4375, .4375) (1.4375, 1.4375)

\psline(1, .0625) (1.5, .5625)
\psline(1, .9375) (1.5, 1.4375)
\psline(1.03125, .03125) (1.03125, 1.03125)
\psline(1.46875, .46875) (1.46875, 1.46875)

\psline(.0625, 1.0625) (1.0625, 1.0625)
\psline(.4375, 1.4375) (1.4375, 1.4375)
\psline(.125, 1) (.625, 1.5)
\psline(.875, 1) (1.375, 1.5)

\psline(.03125, 1.03125) (1.03125, 1.03125)
\psline(.46875, 1.46875) (1.46875, 1.46875)
\psline(.0625, 1) (.5625, 1.5)
\psline(.9375, 1) (1.4375, 1.5)

}
\rput(7.4,.3){\small{$k=5$}}
\end{pspicture}
\end{center}
\caption{The mesh and the approximation degree $k$ for the first five
refinement steps for the approximation of the reference solution.}
\label{fig:RefinementAll}
\end{figure}
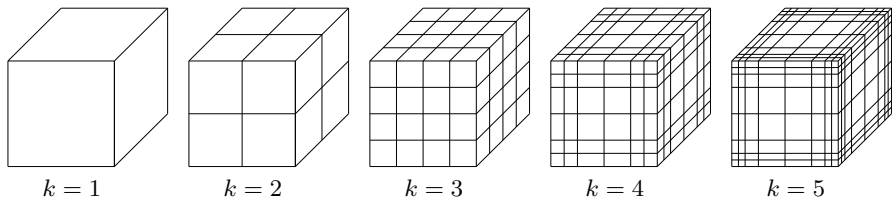

\begin{figure}
\begin{center}
\includegraphics[width=0.49\textwidth]{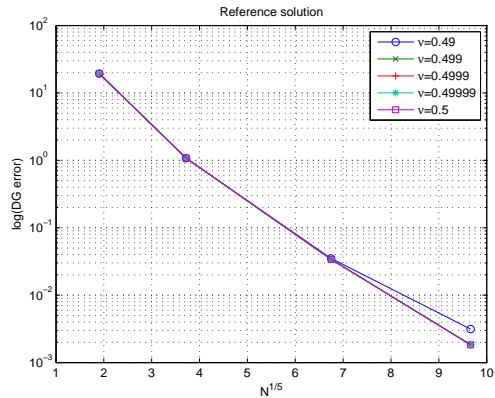}
\end{center}
\caption{Example~\ref{ex:2}: Performance of the spectral DGFEM for different values of~$\nu$.}
\label{fig:ReferenceSolution}
\end{figure}

%%%%%%%%%%%%%%%%%%%%%%%%%%%%%%%%%%%%%%%%%%%%%%%%%%%%%%%%%%%%%%%%%%%%

\section{Conclusions}\label{sec:conclusions}
This paper centres on spectral mixed discontinuous
Galerkin discretizations for linear elasticity and Stokes flow in
three dimensional polyhedral domains. In a series of numerical experiments we have validated our theoretical results from~\cite{wihlerWirz} for various canonical reference situations. In particular, we have performed a computational study on the inf-sup stability and the exponential convergence of this class of methods on anisotropic geometric edge meshes. For the former purpose we have derived a simple procedure to determine the discrete inf-sup constants based on a singular value decomposition approach along the lines of~\cite{BrezziFortin91}.

Following the approach~\cite{SchoetzauSchwabWihler2009:1,SchoetzauSchwabWihler2009:2,SchoetzauSchwabWihlerNeumann}, our work may be extended to variable (and possibly anisotropic) polynomial degree distributions, thereby leading to $hp$-version DGFEM. To prove stability results in that context, however, the discrete inf-sup condition~\eqref{eq:disc_infsup} would need to be generalized to the corresponding $hp$-meshes. Finally, let us mention that the linear mixed DG discretizations addressed in the present work could be combined with the iterative Newton DG (NDG) approach~\cite{HoustonWihler:2017} in order to approximate problems in nonlinear elasticity in three dimensions.

%%%%%%%%%%%%%%%%%%%%%%%%%%%%%%%%%%%%%%%%%%%%%%%%%%%%%%%%%%%%%%%%%%%%

\bibliographystyle{amsplain}
\bibliography{literature}

\end{document}